\newtheorem{corollary}{Corollary}
\newtheorem{definition}{Definition}
\newtheorem{example}{Example}
\newtheorem{proposition}{Proposition}
\newtheorem{remark}{Remark}
\newtheorem{theorem}{Theorem}
\numberwithin{equation}{section}
\begin{document}
\title{Almost Automorphic Distributions}
\author{Chikh BOUZAR}
\address{Laboratory of Mathematical Analysis and Applications, University of
Oran 1, Ahmed Ben Bella. Oran, Algeria}
\email{ch.bouzar@gmail.com ; bouzar@univ-oran.dz}
\author{Fatima Zohra TCHOUAR}
\address{Laboratory of Mathematical Analysis and Applications, University of
Oran 1, Ahmed Ben Bella. Oran, Algeria}
\email{fatima.tchouar@gmail.com}
\date{}
\subjclass[2010]{ Primary 46F05, 43A60; Secondary 42A75, 34C27, 39A24}
\keywords{Almost periodic functions, Almost automorphic functions,~Almost
periodic distributions, Almost automorphic distributions.
Difference-differential equations.}

\begin{abstract}
This work deals with almost automorphy of distributions. We give
characterizations and main properties of these distributions. We also study
the existence of distributional almost automorphic solutions of linear
difference-differential equations.
\end{abstract}

\maketitle

\section{Introduction}

S. Bochner defined explicitly scalar-valued almost automorphic functions and
gave some preliminary results on such functions in \cite{Bochner2} and \cite%
{bochner3}. A general abstract study of almost automorphic functions is done
in the work \cite{veech}. Since then almost automorphy becomes a subject of
interest in mathematical research. It is well known that\ every Bohr almost
periodic function is an almost automorphic one and that the space of almost
automorphic functions is strictly larger then the space of almost periodic
functions.

L.\ Schwartz introduced and studied almost periodic distributions in \cite%
{Schw}.

This work is aimed to introduce and to investigate Bochner almost automorphy
in the setting of Schwartz-Sobolev distributions.

The paper is organised as follows, the second section recalls definitions
and some properties of almost automorphic functions and the spaces $\mathcal{%
D}_{L^{p}}$\ with their topological duals. In the third one, we introduce
smooth almost automorphic functions and give their main properties. Section
four is devoted to the introduction of almost automorphic distributions and
the study of their properties. Section five gives a characterization of
almost automorphic distributions in the spirit of Bochner definition of
almost automorphy of a scalar function. This characterisation will play an
important role in applications. We show that Stepanov almost automorphic
functions are examples of almost automorphic distributions. Finally in the
last section as an application, we study the existence of distributional
almost automorphic solutions of linear difference-differential equations.

\section{Preliminaries}

\ \ \ In this work, we consider functions and distributions defined on the
whole space $\mathbb{R}.$ Let $\mathcal{(C}_{b},\left\Vert .\right\Vert
_{L^{\infty }})$\ denotes the space of continuous, bounded and complex
valued functions on $\mathbb{R}$ endowed with the norm of uniform
convergence on $\mathbb{R},$ it is well-known that $\left( \mathcal{C}%
_{b},\left\Vert .\right\Vert _{L^{\infty }}\right) $ is a Banach algebra.

\begin{definition}
(S. Bochner \cite{Bochner2}) A complex-valued function\ $f$ defined and
continuous on $\mathbb{R}$ is called almost automorphic, if for any sequence
$\left( s_{m}\right) _{m\in \mathbb{N}}\subset \mathbb{R},$ one can extract
a subsequence $\left( s_{m_{k}}\right) _{k}$ such that%
\begin{equation}
g\left( x\right) :=\underset{k\rightarrow +\infty }{\lim }f\left(
x+s_{m_{k}}\right) ~~\text{is well-defined for every }x\in \mathbb{R},
\label{equ5}
\end{equation}%
and%
\begin{equation}
\underset{k\rightarrow +\infty }{\lim }g\left( x-s_{m_{k}}\right) =f\left(
x\right) \text{ for every }x\in \mathbb{R}\text{.}  \label{equ6}
\end{equation}%
The space of almost automorphic functions on $\mathbb{R}$ is denoted by $%
\mathcal{C}_{aa}.$
\end{definition}

\begin{remark}
The function $g$ in the above definition is not necessary continuous but it
is measurable and bounded, so locally integrable$.$
\end{remark}

\begin{remark}
We have the strict inclusion $\mathcal{C}_{ap}$ $\subsetneq $ $\mathcal{C}%
_{aa},~$where $\mathcal{C}_{ap}~$is the space of Bohr almost periodic
functions, see \cite{veech}.
\end{remark}

The following characterization of almost automorphic functions is given in
\cite{bochner3}.

\begin{proposition}
\label{prop1.4}A complex-valued function\ $f$ defined and continuous on $%
\mathbb{R}$ is almost automorphic if and only if for any sequence of real
numbers $\left( s_{m}\right) _{m\in \mathbb{N} },~$one can extract a
subsequence $\left( s_{m_{k}}\right) _{k}$ such that%
\begin{equation*}
\lim_{l\rightarrow +\infty }\lim_{k\rightarrow +\infty }f\left(
x+s_{m_{k}}-s_{m_{l}}\right) =f\left( x\right) ,\text{ }\forall x\in \mathbb{%
R}.
\end{equation*}
\end{proposition}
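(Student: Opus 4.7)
The plan is to unpack both definitions and reduce the equivalence to a change of variables, relying on the observation that a subsequence meeting the iterated limit condition automatically produces a limit function defined pointwise on all of $\mathbb{R}$. No diagonal extraction will be needed.

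For the direct implication $(\Rightarrow)$, I would start from a sequence $(s_m)$ and use the almost automorphy of $f$ to pull out a subsequence $(s_{m_k})$ yielding both (\ref{equ5}) and (\ref{equ6}). With $g$ in hand, evaluating $g$ at the shifted point $x-s_{m_l}$ rewrites the inner limit in the statement as $g(x-s_{m_l})=\lim_{k}f(x+s_{m_k}-s_{m_l})$, and then (\ref{equ6}) handles the outer limit as $l\to+\infty$, producing $f(x)$.

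For the converse $(\Leftarrow)$, I would take any real sequence $(s_m)$ and apply the assumption to extract $(s_{m_k})$ with the iterated limit property. The step I want to emphasize is the following: since the inner limit $\lim_{k}f(x+s_{m_k}-s_{m_l})$ is required to exist for every $x$ and every $l$, fixing any single $l$ and letting $x$ range over $\mathbb{R}$ shows that $g(y):=\lim_{k}f(y+s_{m_k})$ is defined everywhere on $\mathbb{R}$. Once this is observed, the inner limit is simply $g(x-s_{m_l})$, and the iterated limit assumption collapses to $\lim_{l}g(x-s_{m_l})=f(x)$, which together with the definition of $g$ furnishes both (\ref{equ5}) and (\ref{equ6}) for the same subsequence.

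I do not expect a real analytic obstacle. The only point that could mislead is the temptation to argue $g$ into existence through a boundedness and diagonal extraction over a countable dense set, followed by some continuity appeal; the argument above shows such machinery is unnecessary, because the iterated limit hypothesis already encodes the pointwise existence of $g$ on all of $\mathbb{R}$ directly.
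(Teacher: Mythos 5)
Your proof is correct. The paper itself gives no proof of Proposition \ref{prop1.4} (it is quoted from \cite{bochner3}), but your argument is the standard one and it is complete: in the forward direction the inner limit is $g(x-s_{m_l})$ by (\ref{equ5}) and the outer limit is (\ref{equ6}); in the converse, your observation that the mere existence of the inner limit for one fixed $l$, together with surjectivity of $x\mapsto x-s_{m_l}$, already defines $g(y)=\lim_{k}f(y+s_{m_k})$ on all of $\mathbb{R}$ is exactly the right point, and it correctly dispenses with any diagonal extraction.
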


Some properties of almost automorphic functions are summarized in the
following proposition.

\begin{proposition}
\label{prop1.1-1.2-1.3}

\begin{enumerate}
\item $\mathcal{C}_{aa}$ is a Banach subalgebra of $\mathcal{C}_{b}.$

\item Let$~f\in \mathcal{C}_{aa}~$and$~\lim\limits_{x\rightarrow \infty
}f\left( x\right) =0,~$then~$f=0.$

\item Let$~f\in \mathcal{C}_{aa}$ and $h\in \mathbb{R},$~then $\tau
_{h}f:=f\left( .+h\right) \in \mathcal{C}_{aa}.$

\item If $f\in \mathcal{C}_{aa}$ and $g\in L^{1},$ then the convolution\ $%
f\ast g\in \mathcal{C}_{aa}.$
\end{enumerate}
\end{proposition}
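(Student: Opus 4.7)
All four items rely on the diagonal-extraction mechanism built into the definition of $\mathcal{C}_{aa}$. I would dispose of (3) first as a template: given $(s_m)$, the subsequence $(s_{m_k})$ extracted for $f$ works for $\tau_h f$ as well with associated limit function $\tau_h g$, since $(\tau_h f)(x + s_{m_k}) = f((x+h) + s_{m_k}) \to g(x+h)$, and the same substitution recovers (\ref{equ6}). For (2), I would choose any sequence $s_m \to +\infty$; the hypothesis that $f$ vanishes at infinity forces the extracted limit $g$ to be identically zero, and (\ref{equ6}) then gives $f(x) = \lim_k g(x - s_{m_k}) = 0$ for every $x$.

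\textbf{Proof of (1).} First, $\mathcal{C}_{aa} \subset \mathcal{C}_b$: if $f$ were unbounded, a sequence $(s_m)$ with $|f(s_m)| \to \infty$ would prevent any pointwise limit $g(0)$ from existing. To show $\mathcal{C}_{aa}$ is an algebra, given $f_1, f_2 \in \mathcal{C}_{aa}$ and a sequence $(s_m)$, I would perform a two-step extraction: first a subsequence that works for $f_1$ producing $g_1$, then a sub-subsequence within it that works for $f_2$ producing $g_2$; along this common subsequence, $f_1 + f_2$ and $f_1 f_2$ satisfy (\ref{equ5}) with limits $g_1 + g_2$ and $g_1 g_2$, and the reverse relation (\ref{equ6}) is handled the same way. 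Completeness in $\|\cdot\|_{L^\infty}$ requires a diagonal argument: for a uniformly Cauchy sequence $f_n \to f$ with $f_n \in \mathcal{C}_{aa}$ and a given $(s_m)$, I would inductively extract nested subsequences on which each $f_n(\cdot + s_{m_k})$ converges pointwise to some $g_n$, then diagonalise. A $3\varepsilon$ argument exploiting the uniform bound $|g_n(x) - g_p(x)| \leq \|f_n - f_p\|_{L^\infty}$ produces a pointwise limit $g$ for $f(\cdot + s_{m_k})$, and the reverse direction follows symmetrically.

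\textbf{Proof of (4).} With $f \in \mathcal{C}_{aa}$, $\varphi \in L^1$ and $(s_m)$ given, extract $(s_{m_k})$ and $g$ from the definition applied to $f$; note $\|g\|_{L^\infty} \leq \|f\|_{L^\infty}$. A change of variables inside the convolution gives
\begin{equation*}
(f \ast \varphi)(x + s_{m_k}) = \int f(x - y + s_{m_k}) \varphi(y)\, dy,
\end{equation*}
whose integrand converges pointwise in $y$ to $g(x - y) \varphi(y)$ and is dominated by $\|f\|_{L^\infty} |\varphi(y)| \in L^1$; dominated convergence then gives convergence to $(g \ast \varphi)(x)$. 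The second condition (\ref{equ6}) for $f \ast \varphi$ is verified by the same argument applied in reverse, using $|g| \leq \|f\|_{L^\infty}$ and the pointwise convergence $g(x - y - s_{m_k}) \to f(x - y)$.

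\textbf{Main obstacle.} The delicate point is the completeness part of (1): organising a simultaneous diagonal extraction for countably many $f_n$ and then exchanging the uniform limit in $n$ with the pointwise limit in $k$ must be handled carefully, because the limit functions $g_n$ are only bounded and measurable, not continuous, so no estimate on them is directly available beyond the sup-norm bound inherited from $f_n$.
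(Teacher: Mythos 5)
The paper itself offers no proof of this proposition: it is listed in the preliminaries as a summary of classical facts about $\mathcal{C}_{aa}$ (with the literature, notably Bochner and Veech, as implicit source), so there is no in-paper argument to compare yours against. Your proposal is correct and follows the standard route one would find in that literature: translation invariance and the vanishing-at-infinity statement read off directly from the definition (choosing $s_m\to+\infty$ so that the intermediate limit $g$ vanishes identically, whence $f(x)=\lim_k g(x-s_{m_k})=0$); the algebra property by successive extraction of a common subsequence for $f_1$ and $f_2$; completeness by nested extractions, a diagonal subsequence, the uniform estimate $\vert g_n(x)-g_p(x)\vert\le\Vert f_n-f_p\Vert_{L^{\infty}}$ and a $3\varepsilon$ exchange of limits, which is exactly the delicate point and you handle it correctly (only sup-norm bounds on the $g_n$ are needed, so their lack of continuity is harmless); and the convolution statement by dominated convergence in both directions, using $\Vert g\Vert_{L^{\infty}}\le\Vert f\Vert_{L^{\infty}}$. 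The only small addition I would ask for is in item (4): membership in $\mathcal{C}_{aa}$ also requires $f\ast g$ to be continuous (and bounded), which you should record explicitly; it follows at once from dominated convergence together with the continuity and boundedness of $f$, or from continuity of translation in $L^{1}$. With that remark added, the proof is complete.
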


The condition of almost automorphy of a primitive has been established in
\cite{zaki}.

\begin{proposition}
\label{th02} A primitive of an almost automorphic function is almost
automorphic if and only if it is bounded.
\end{proposition}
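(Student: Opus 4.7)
The plan is to address the two directions of the equivalence separately. The necessary direction is immediate from Proposition \ref{prop1.1-1.2-1.3}(1): every almost automorphic function is bounded, so if the primitive $F$ is almost automorphic, it is bounded.

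For the sufficiency, assume $F$ is a bounded primitive of $f \in \mathcal{C}_{aa}$. Given any sequence $(s_m) \subset \mathbb{R}$, the almost automorphy of $f$ lets us extract a subsequence (relabelled $(s_n)$) such that $g(x) = \lim_n f(x + s_n)$ exists for every $x$ and $\lim_n g(x - s_n) = f(x)$ for every $x$. By boundedness of $F$ and Bolzano-Weierstrass, I would extract further so that $F(s_n) \to c$ for some $c$, and set $G(x) := c + \int_0^x g(t) dt$. Using $F(x + s_n) = F(s_n) + \int_0^x f(t + s_n) dt$ together with dominated convergence (valid because $|f(t+s_n)| \leq \|f\|_{L^\infty}$), one obtains $\lim_n F(x + s_n) = G(x)$ for all $x$, verifying (\ref{equ5}). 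Similarly, since $|G(y)| \leq \|F\|_{L^\infty}$, extract further so that $G(-s_n) \to d$; then the identity $G(x - s_n) - G(-s_n) = \int_0^x g(t - s_n) dt$ combined with dominated convergence gives $\lim_n G(x - s_n) = d + F(x) - F(0) =: H(x)$.

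Condition (\ref{equ6}) demands $H = F$, equivalently $C := d - F(0) = 0$, which is the main obstacle. The plan is to proceed by contradiction through an iteration along the \emph{same} subsequence. Suppose $C \neq 0$. Then $H = F + C$ is itself a bounded primitive of $f$, and because every step of the construction is linear in the underlying function, applying the construction to $H$ along $(s_n)$ yields $\lim_n H(x + s_n) = G(x) + C$ and then $\lim_n[G(x - s_n) + C] = H(x) + C = F(x) + 2C$. Iterating produces $F_k := F + kC$ for each $k \in \mathbb{N}$, with associated $G_k = G + kC$. The nested inequalities $\|F_{k+1}\|_{L^\infty} \leq \|G_k\|_{L^\infty} \leq \|F_k\|_{L^\infty}$ (each limit of translates is majorised by the sup norm of its predecessor) force $\|F_k\|_{L^\infty} \leq \|F\|_{L^\infty}$ for all $k$. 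But the reverse triangle inequality gives $\|F + kC\|_{L^\infty} \geq k|C| - \|F\|_{L^\infty}$, which exceeds $\|F\|_{L^\infty}$ as soon as $k > 2\|F\|_{L^\infty}/|C|$, contradicting the uniform bound. Hence $C = 0$, so $H = F$ and $F \in \mathcal{C}_{aa}$.
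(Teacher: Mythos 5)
Your proof is correct. Note first that the paper itself does not prove this proposition: it simply quotes it as an established result of Zaki (reference \cite{zaki}), so there is no internal proof to compare against; what you have supplied is a self-contained proof of that cited theorem. Your argument is sound: the necessity is immediate since $\mathcal{C}_{aa}\subset\mathcal{C}_{b}$; for the sufficiency, the identity $F(x+s_n)=F(s_n)+\int_0^x f(t+s_n)\,dt$ plus dominated convergence (legitimate, as $f$ and $g$ are bounded by $\Vert f\Vert_{L^\infty}$ and $g$ is measurable) correctly yields the two limit functions $G$ and $H=F+C$, and the only real issue is forcing $C=0$. Your iteration handles this cleanly: since all limits are taken along the one fixed subsequence, no further extractions are needed when the construction is reapplied to $F+kC$ (linearity gives $F_k=F+kC$, $G_k=G+kC$ directly), each pointwise limit of translates is dominated by the sup norm of its predecessor, so $\Vert F+kC\Vert_{L^\infty}\leq\Vert F\Vert_{L^\infty}$ for all $k$, which is incompatible with $C\neq 0$ by the reverse triangle inequality. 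This is essentially the classical Bochner--Zaki mechanism (reduce to showing a limiting additive constant vanishes, then exploit boundedness), but your "boundedness versus $F+kC$" iteration is an elementary and transparent way to finish, using only Bolzano--Weierstrass and dominated convergence; what it buys in the context of this paper is independence from the external reference, at the cost of a longer argument than the one-line citation the authors chose.
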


For more details on the following part see \cite{Schw} and \cite{Sobol}. Let
$\mathcal{C}^{\infty }$ be the space of infinitely differentiable functions
on $\mathbb{R}$~and $p\in \left[ 1,+\infty \right] ,~$the space
\begin{equation*}
\mathcal{D}_{L^{p}}:=\left\{ \varphi \in \mathcal{C}^{\infty }:~\forall i\in
\mathbb{Z}_{+},~\varphi ^{\left( i\right) }\in L^{p}\right\}
\end{equation*}%
endowed with the topology defined by the countable family of norms%
\begin{equation*}
\left\vert \varphi \right\vert _{m,p}=\sum_{i=0}^{m}\left\Vert \varphi
^{\left( i\right) }\right\Vert _{L^{p}},\text{ }m\in \mathbb{Z}_{+},
\end{equation*}%
is a Fr\'{e}chet subalgebra of $\mathcal{C}^{\infty }.~$

The space $\mathcal{\dot{B}~}$is by definition the closure in $\mathcal{D}%
_{L^{\infty }}~$of the space $\mathcal{D}$ of smooth functions with compact
support, it is the space of functions from $\mathcal{D}_{L^{\infty }}~$which
vanish at infinity with all their derivatives. It is clear that the space $%
\mathcal{D}$\ is dense in $\mathcal{D}_{L^{p}},1\leq p<+\infty ,$\ and $%
\mathcal{\dot{B}}.$

The space of $L^{p}-$distributions, denoted by $\mathcal{D}_{L^{p}}^{\prime
},1<p\leq +\infty ,~$is the topological dual space of $\mathcal{D}_{L^{q}},$
where $\frac{1}{p}+\frac{1}{q}=1.$ The topological dual of $\mathcal{\dot{B}}
$ is denoted by $\mathcal{D}_{L^{1}}^{\prime }.$~We have the following
characterizations of $\mathcal{D}_{L^{p}}^{\prime }$,~$1\leq p\leq +\infty .$

\begin{proposition}
Let $T\in $ $\mathcal{D}^{\prime },~$the following statements are equivalent
:

\begin{enumerate}
\item $T\in $ $\mathcal{D}_{L^{p}}^{\prime }.$

\item $T\ast \varphi \in L^{p},\forall \varphi \in \mathcal{D}.$

\item $\exists $ $\left( f_{j}\right) _{j\leq k}\subset
L^{p},T=\sum\limits_{j=0}^{k}f_{j}^{\left( j\right) }.$
\end{enumerate}
\end{proposition}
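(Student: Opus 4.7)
The plan is to prove the chain $(3)\Rightarrow(1)$, $(3)\Rightarrow(2)$, $(1)\Rightarrow(3)$, and $(2)\Rightarrow(3)$, so that $(3)$ acts as a central hub. The first two directions are routine; $(1)\Rightarrow(3)$ is the classical Hahn--Banach representation; $(2)\Rightarrow(3)$ is the core obstacle and will be handled via a parametrix argument.

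For the easy directions: assuming $(3)$, write $T=\sum_{j=0}^{k}f_{j}^{(j)}$ with $f_{j}\in L^{p}$. For $\varphi\in\mathcal{D}_{L^{q}}$, H\"older's inequality yields $|\langle T,\varphi\rangle|\le\bigl(\sum_{j}\|f_{j}\|_{L^{p}}\bigr)|\varphi|_{k,q}$, which is $(1)$. For $(2)$, compute $T*\varphi=\sum_{j}f_{j}*\varphi^{(j)}$; each summand lies in $L^{p}$ by Young's inequality since $\varphi^{(j)}\in L^{1}$.

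For $(1)\Rightarrow(3)$: continuity of $T$ on $\mathcal{D}_{L^{q}}$ yields constants $m,C$ with $|\langle T,\varphi\rangle|\le C\sum_{i=0}^{m}\|\varphi^{(i)}\|_{L^{q}}$ for $\varphi\in\mathcal{D}$. The map $\iota:\varphi\mapsto(\varphi,\varphi',\dots,\varphi^{(m)})$ embeds $\mathcal{D}$ into $(L^{q})^{m+1}$ with the product norm, and $T\circ\iota^{-1}$ is a continuous linear functional on $\iota(\mathcal{D})$. Hahn--Banach extends it to all of $(L^{q})^{m+1}$, and using $(L^{q})^{*}=L^{p}$ one obtains $(f_{0},\dots,f_{m})\in(L^{p})^{m+1}$ with $\langle T,\varphi\rangle=\sum_{j}\int f_{j}\varphi^{(j)}$, hence $T=\sum_{j}(-1)^{j}f_{j}^{(j)}$. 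The case $p=1$, where $T$ acts on $\mathcal{\dot{B}}$, requires the Riesz--Markov theorem together with an additional regularization step to replace measures by $L^{1}$ functions.

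The main obstacle is $(2)\Rightarrow(3)$. The plan is to use a compactly supported parametrix for a high power of $1-\partial^{2}$. Let $E=\tfrac{1}{2}e^{-|\cdot|}$ so that $(1-\partial^{2})E=\delta$, and let $E_{N}=E*\cdots*E$ ($N$ factors), so $(1-\partial^{2})^{N}E_{N}=\delta$ and $E_{N}\in C^{2N-2}$. Pick $\alpha\in\mathcal{D}$ with $\alpha\equiv 1$ on a neighbourhood of $0$ and set $G=\alpha E_{N}\in C_{c}^{2N-2}$. Since $E_{N}$ is smooth away from $0$, a direct calculation gives $(1-\partial^{2})^{N}G=\delta+\mu$ with $\mu\in\mathcal{D}$. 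Convolving with $T$,
\[
T=(1-\partial^{2})^{N}(T*G)-T*\mu.
\]
The term $T*\mu$ lies in $L^{p}$ by hypothesis $(2)$. For $T*G$, approximate $G$ by $G_{n}=G*\rho_{1/n}\in\mathcal{D}$ in $C_{c}^{2N-2}$; the closed graph theorem, applied to $\psi\mapsto T*\psi$ from $\mathcal{D}$ to $L^{p}$, shows (after choosing $N$ large enough relative to the local order of $T$) that this map extends continuously from $C_{c}^{2N-2}$ to $L^{p}$, so $T*G\in L^{p}$. Expanding $(1-\partial^{2})^{N}$ as a linear combination of derivatives of orders $0,\dots,2N$ then delivers the representation $(3)$. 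The delicate point is justifying the closed-graph extension: one needs that $T$ has uniformly bounded local order (which follows from $(2)$ and translation invariance of convolution) and that pointwise-a.e.\ limits of $T*G_{n}$ in $L^{p}$ coincide with the $\mathcal{D}'$-limit $T*G$.
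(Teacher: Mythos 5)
Your argument is correct in outline, but note that the paper does not prove this proposition at all: it is stated as a classical structure theorem with a pointer to the references of Schwartz and Sobolev. The closest argument actually carried out in the paper is the proof of Theorem \ref{th03} ($1\Rightarrow 2$), where the hypothesis $T\in \mathcal{D}_{L^{\infty }}^{\prime }$ supplies a global estimate $\left\vert \left\langle T,\psi \right\rangle \right\vert \leq C\left\vert \psi \right\vert _{m,1}$ and one convolves with the true fundamental solution $G$ of $(1-\frac{d^{2}}{dx^{2}})^{p}$, approximating $G$ by test functions in $\mathcal{D}_{L^{1}}^{2m+2}$ so that $T\ast \theta _{k}\rightarrow T\ast G$ uniformly. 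Your route is genuinely different on the hard implication $(2)\Rightarrow (3)$: since condition $(2)$ carries no a priori global bound, you localize, replacing the fundamental solution by the compactly supported parametrix $G=\alpha E_{N}$ with error $\mu \in \mathcal{D}$, and you recover the missing quantitative information from the closed graph theorem applied to $\varphi \mapsto T\ast \varphi $ on $\mathcal{D}_{K}$, $K$ a fixed compact neighbourhood of the support of $\alpha $ chosen before $N$. This scheme is sound: the graph is closed because convergence in $L^{p}$ and in $\mathcal{D}_{K}$ both imply convergence in $\mathcal{D}^{\prime }$; the resulting order $m$ depends only on $K$ and $T$, not on $N$, so one may then take $2N-2\geq m$ and conclude $T\ast G\in L^{p}$ by mollifying $G$; expanding $(1-\partial ^{2})^{N}$ gives $(3)$. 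Two remarks: your parenthetical claim that a uniformly bounded local order over all of $\mathbb{R}$ ``follows from $(2)$ and translation invariance'' is neither justified nor needed, since only the local estimate on the fixed compact enters the argument; and your $(1)\Rightarrow (3)$ via Hahn--Banach in $(L^{q})^{m+1}$ (with Riesz representation and a convolution with $\frac{1}{2}e^{-\left\vert x\right\vert }$ to trade bounded measures for $L^{1}$ functions when $p=1$) is precisely the classical Schwartz proof, so on that implication you are reproducing the cited source rather than diverging from it. What your approach buys is a self-contained proof of the equivalence for a bare $T\in \mathcal{D}^{\prime }$, at the price of invoking the closed graph theorem where the paper's analogous argument can rely on the defining estimate of $\mathcal{D}_{L^{\infty }}^{\prime }$.
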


\begin{remark}
A distribution from the space $\mathcal{D}_{L^{\infty }}^{\prime }~$is
called a bounded distribution.
\end{remark}

\section{Smooth almost automorphic functions}

The space of smooth almost automorphic functions on $\mathbb{R}$ is denoted
and defined as follows
\begin{equation*}
\mathcal{B}_{aa}:=\left\{ \varphi \in \mathcal{C}^{\infty }:~\forall i\in
\mathbb{Z}_{+},~\varphi ^{\left( i\right) }\in \mathcal{C}_{aa}\right\} .
\end{equation*}%
We note that $\mathcal{B}_{aa}\subset \mathcal{D}_{L^{\infty }}~$and that$\
\mathcal{B}_{aa}$ is endowed with the induced topology of $\mathcal{D}%
_{L^{\infty }}.$

Here are some properties of the space $\mathcal{B}_{aa}.$

\begin{proposition}
\label{pr02}

\begin{enumerate}
\item $\mathcal{B}_{aa}=\mathcal{C}_{aa}\cap \mathcal{D}_{L^{\infty }}.$

\item $~\mathcal{B}_{aa}~$is a Fr\'{e}chet subalgebra of $\mathcal{D}%
_{L^{\infty }}.$

\item $\mathcal{B}_{aa}\ast L^{1}\subset \mathcal{B}_{aa}.$
\end{enumerate}
\end{proposition}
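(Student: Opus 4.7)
The plan is to reduce all three assertions to Proposition \ref{prop1.1-1.2-1.3}, which tells us that $\mathcal{C}_{aa}$ is a translation-invariant Banach subalgebra of $\mathcal{C}_{b}$ stable under $L^{1}$-convolution. Combined with the usual smoothness manipulations---Leibniz rule, differentiation under the integral, and approximation of a derivative by difference quotients---these four facts suffice for everything.

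For item (1), the inclusion $\mathcal{B}_{aa}\subset \mathcal{C}_{aa}\cap \mathcal{D}_{L^{\infty }}$ is immediate because $\mathcal{C}_{aa}\subset \mathcal{C}_{b}$ forces each derivative of $\varphi \in \mathcal{B}_{aa}$ to be bounded. The opposite inclusion is the substantive content, and I would establish it by induction on $i\geq 0$: assuming $\varphi \in \mathcal{C}_{aa}\cap \mathcal{D}_{L^{\infty }}$, prove $\varphi^{(i)}\in \mathcal{C}_{aa}$. The key intermediate lemma is that if $f\in \mathcal{C}_{aa}$ is differentiable with $f'$ bounded and uniformly continuous, then $f'\in \mathcal{C}_{aa}$. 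Writing
\begin{equation*}
\frac{\tau _{h}f-f}{h}(x)=\frac{1}{h}\int_{0}^{h}f'(x+t)\,dt,
\end{equation*}
parts (1) and (3) of Proposition \ref{prop1.1-1.2-1.3} place the left-hand side in $\mathcal{C}_{aa}$ for every $h\neq 0$, while uniform continuity of $f'$ forces $(\tau _{h}f-f)/h\to f'$ uniformly as $h\to 0$; closedness of $\mathcal{C}_{aa}$ in $(\mathcal{C}_{b},\|\cdot\|_{L^{\infty }})$ then places $f'$ in $\mathcal{C}_{aa}$. In the inductive step, the required uniform continuity of $\varphi^{(i+1)}$ comes for free: $\varphi^{(i+2)}\in L^{\infty }$ makes $\varphi^{(i+1)}$ Lipschitz.

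Item (2) is formal once (1) is available. The Leibniz rule
\begin{equation*}
(\varphi \psi)^{(n)}=\sum_{k=0}^{n}\binom{n}{k}\varphi^{(k)}\psi^{(n-k)}
\end{equation*}
combined with part (1) of Proposition \ref{prop1.1-1.2-1.3} yields closure under products, and the Fréchet property follows from the observation that $\mathcal{B}_{aa}$ is a closed subspace of $\mathcal{D}_{L^{\infty }}$: convergence $\varphi_{n}\to \varphi$ in $\mathcal{D}_{L^{\infty }}$ forces $\varphi_{n}^{(i)}\to \varphi^{(i)}$ uniformly for every $i$, and $\mathcal{C}_{aa}$ is uniformly closed. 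For item (3), differentiation under the integral (justified by $\varphi^{(i)}\in L^{\infty }$ and $g\in L^{1}$) gives $(\varphi \ast g)^{(i)}=\varphi^{(i)}\ast g$, and Proposition \ref{prop1.1-1.2-1.3}(4) puts each term in $\mathcal{C}_{aa}$.

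The only real obstacle in the whole proof is the auxiliary derivative lemma in item (1); once it is established, items (2) and (3) reduce to bookkeeping with the Leibniz formula and dominated convergence.
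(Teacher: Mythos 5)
Your proof is correct and follows essentially the same route as the paper's: the mean value theorem gives Lipschitz (hence uniform) continuity of each derivative in item (1), item (2) amounts to $\mathcal{B}_{aa}$ being a closed subalgebra of the Fr\'echet space $\mathcal{D}_{L^{\infty }}$ via the uniform closedness and algebra property of $\mathcal{C}_{aa}$, and item (3) is the identity $(\varphi \ast g)^{(i)}=\varphi ^{(i)}\ast g$ combined with Proposition \ref{prop1.1-1.2-1.3}. The only divergence is that where the paper cites Veech for the fact that a uniformly continuous derivative of an almost automorphic function is again almost automorphic, you prove it directly from the difference-quotient identity, translation invariance, and closedness of $\mathcal{C}_{aa}$ in $\left( \mathcal{C}_{b},\left\Vert \cdot \right\Vert _{L^{\infty }}\right)$, which makes the argument self-contained.
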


\begin{proof}
$~1.~$It is clear that $\mathcal{B}_{aa}\subset \mathcal{C}_{aa}\cap
\mathcal{D}_{L^{\infty }}.~$Now let $f\in \mathcal{C}_{aa}\cap \mathcal{D}%
_{L^{\infty }},~$then by the mean value theorem,~we get%
\begin{equation*}
\left\vert f^{\left( i\right) }\left( x\right) -f^{\left( i\right) }\left(
y\right) \right\vert \leq \underset{z\in \mathbb{R}}{\sup }\left\vert
f^{\left( i+1\right) }\left( z\right) \right\vert \left\vert x-y\right\vert
,~\forall i\in \mathbb{Z}_{+},~\forall x,y\in \mathbb{R},
\end{equation*}%
this gives that$~\forall i\in \mathbb{Z}_{+},~f^{\left( i\right) }~$is
uniformly continuous, and it is known that the derivative of almost
automorphic function is also almost automorphic if and only if it is
uniformly continuous \cite{veech}, so $\forall i\in \mathbb{Z}%
_{+},~f^{\left( i\right) }~\in \mathcal{C}_{aa},$ i.e. $f\in \mathcal{B}%
_{aa} $.

$2.~$As the topology of $\mathcal{B}_{aa}\subset \mathcal{D}_{L^{\infty }}$
is given by the countable family of submultiplicative norms $\left\vert
.\right\vert _{k,\infty },k\in \mathbb{Z}_{+},$ it remains to show the
completeness of $\mathcal{B}_{aa}.$ Let $\left( f_{m}\right) _{m\in \mathbb{N%
}}\subset \mathcal{B}_{aa}$ be a Cauchy sequence, it is clear that$~\forall
i\in \mathbb{Z}_{+},~\left( f_{m}^{\left( i\right) }\right) _{m\in \mathbb{N}%
}$ is a Cauchy sequence in $\mathcal{C}_{aa},$ and since $\left( \mathcal{C}%
_{aa},\left\Vert .\right\Vert _{L^{\infty }}\right) $ is complete, then $%
\forall i\in \mathbb{Z}_{+},~~f_{m}^{\left( i\right) }$ converges uniformly
to $f_{i}\in \mathcal{C}_{aa},~$setting$~f_{0}=f,~$we obtain, due to the
uniform convergence, that$~f\in \mathcal{C}^{\infty }~$and$~\forall i\in
\mathbb{Z}_{+},~f^{\left( i\right) }=f_{i}\in \mathcal{C}_{aa},~$ i.e. $%
\left( f_{m}\right) _{m\in \mathbb{N}}$ converges to $f~$in the topology of$~%
\mathcal{B}_{aa},$ which means that$~\mathcal{B}_{aa}$ is complete.

$3.~$If~ $h\in L^{1}$ and$~f\in \mathcal{B}_{aa},$ then $\left( f\ast
h\right) \in \mathcal{C}^{\infty }~$and $\forall i\in \mathbb{Z}_{+},~\left(
f\ast h\right) ^{\left( i\right) }=f^{\left( i\right) }\ast h\in \mathcal{C}%
_{aa}~$by proposition $\ref{prop1.1-1.2-1.3}-\left( 3\right) .$
\end{proof}

\begin{remark}
We have $\mathcal{B}_{aa}\varsubsetneq \mathcal{C}^{\infty }\cap \mathcal{C}%
_{aa}.$
\end{remark}

The following result is a consequence of the above proposition.

\begin{corollary}
Let $f\in \mathcal{D}_{L^{\infty }},~$then $f\in \mathcal{B}_{aa}$ if and
only if $\forall $ $\varphi \in \mathcal{D},f\ast \varphi \in \mathcal{C}%
_{aa}.$
\end{corollary}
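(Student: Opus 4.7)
The plan is to handle the two directions separately and lean heavily on Proposition \ref{pr02}.

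For the forward implication, note that any test function $\varphi \in \mathcal{D}$ belongs to $L^{1}$ since it is continuous with compact support. Thus if $f \in \mathcal{B}_{aa}$, Proposition \ref{pr02}-(3) gives $f \ast \varphi \in \mathcal{B}_{aa} \subset \mathcal{C}_{aa}$. This direction is immediate.

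For the converse, let $f \in \mathcal{D}_{L^{\infty}}$ satisfy $f \ast \varphi \in \mathcal{C}_{aa}$ for every $\varphi \in \mathcal{D}$. By Proposition \ref{pr02}-(1) it suffices to show $f \in \mathcal{C}_{aa}$, since already $f \in \mathcal{D}_{L^{\infty}}$. The idea is to approximate $f$ uniformly by convolution with a mollifier and use the fact that $\mathcal{C}_{aa}$ is closed under uniform convergence. Concretely, pick $\rho \in \mathcal{D}$ with $\rho \geq 0$, $\operatorname{supp}\rho \subset [-1,1]$, and $\int \rho = 1$, and set $\rho_{\varepsilon}(x) = \varepsilon^{-1}\rho(x/\varepsilon)$, so that each $\rho_{\varepsilon}\in \mathcal{D}$ and $f \ast \rho_{\varepsilon} \in \mathcal{C}_{aa}$ by hypothesis.

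The key observation is that every $f \in \mathcal{D}_{L^{\infty}}$ is uniformly continuous on $\mathbb{R}$: since $f' \in L^{\infty}$, the mean value theorem gives $|f(x)-f(y)| \leq \|f'\|_{L^{\infty}}|x-y|$. Consequently
\begin{equation*}
\sup_{x \in \mathbb{R}} |(f \ast \rho_{\varepsilon})(x) - f(x)| \leq \sup_{x \in \mathbb{R}} \int_{-\varepsilon}^{\varepsilon} |f(x-y)-f(x)|\, \rho_{\varepsilon}(y)\, dy \leq \|f'\|_{L^{\infty}}\, \varepsilon,
\end{equation*}
so $f \ast \rho_{\varepsilon} \to f$ uniformly on $\mathbb{R}$ as $\varepsilon \to 0^{+}$. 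Since $(\mathcal{C}_{aa},\|\cdot\|_{L^{\infty}})$ is complete (Proposition \ref{prop1.1-1.2-1.3}-(1)) and each $f \ast \rho_{\varepsilon} \in \mathcal{C}_{aa}$, we conclude $f \in \mathcal{C}_{aa}$, hence $f \in \mathcal{B}_{aa}$ by Proposition \ref{pr02}-(1).

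The only slightly delicate point is ensuring that the mollification converges \emph{uniformly} rather than merely in some weaker sense; this is where membership in $\mathcal{D}_{L^{\infty}}$ (and specifically boundedness of $f'$) is essential, since an arbitrary continuous bounded function need not be uniformly continuous and then Banach-closedness of $\mathcal{C}_{aa}$ could not be invoked.
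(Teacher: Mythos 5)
Your proposal is correct and follows essentially the same route as the paper: the forward direction via Proposition \ref{pr02}-(3) with $\mathcal{D}\subset L^{1}$, and the converse by mollifying $f$, using the mean value theorem (boundedness of $f'$ from $f\in\mathcal{D}_{L^{\infty}}$) to get uniform convergence $f\ast\rho_{\varepsilon}\to f$, then closedness of $\mathcal{C}_{aa}$ under uniform limits and the identity $\mathcal{B}_{aa}=\mathcal{C}_{aa}\cap\mathcal{D}_{L^{\infty}}$. The only cosmetic difference is your continuous mollifier parameter $\varepsilon$ in place of the paper's sequence $\rho_{m}$.
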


\begin{proof}
The necessity is clear from the above proposition. Let $f\in \mathcal{D}%
_{L^{\infty }},$ take a sequence $\left( \rho _{m}\right) _{m\in \mathbb{N}%
}\subset \mathcal{D}$ such that $\rho _{m}\geq 0,supp\rho _{m}\subset \left[
-\frac{1}{m},\frac{1}{m}\right] $~and$~\int\limits_{\mathbb{R}}\rho
_{m}\left( x\right) dx=1,$ by hypothesis $f\ast \rho _{m}\in \mathcal{C}%
_{aa},\forall m\in \mathbb{N}.$ As
\begin{eqnarray*}
\left\vert f\ast \rho _{m}\left( x\right) -f\left( x\right) \right\vert
&=&\left\vert \int\limits_{\mathbb{R}}\rho _{m}\left( y\right) \left(
f\left( x-y\right) -f\left( x\right) \right) dy\right\vert , \\
&\leq &\sup_{y\in \mathbb{R}}\left\vert f^{\prime }\left( y\right)
\right\vert \int\limits_{\frac{-1}{m}}^{\frac{1}{m}}\rho _{m}\left( y\right)
\left\vert y\right\vert dy,~ \\
&\leq &\frac{1}{m}\sup_{z\in \mathbb{R}}\left\vert f^{\prime }\left(
z\right) \right\vert ,
\end{eqnarray*}%
so $\left( f\ast \rho _{m}\right) _{m\in \mathbb{N}}~$converges uniformly to
$f~$on $\mathbb{R},~$by proposition $\ref{prop1.1-1.2-1.3}-\left( 1\right) ,$
$f\in \mathcal{C}_{aa},~$i.e. $~f\in \mathcal{D}_{L^{\infty }}\cap \mathcal{C%
}_{aa}=\mathcal{B}_{aa}.$
\end{proof}

\section{ Almost automorphic distributions}

\ \ \ \ The goal of this section is the introduction of almost automorphic
distributions and the study of some of their properties.

\begin{theorem}
\label{th03}Let $T\in \mathcal{D}_{L^{\infty }}^{\prime },$ the following
statements are equivalent :

\begin{enumerate}
\item $T\ast \varphi \in \mathcal{C}_{aa},~\forall $ $\varphi \in \mathcal{D}%
.$

\item $\exists $ $\left( f_{j}\right) _{j\leq k}\subset \mathcal{C}%
_{aa},~T=\sum\limits_{j=0}^{k}f_{j}^{\left( j\right) }.$
\end{enumerate}
\end{theorem}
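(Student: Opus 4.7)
The plan is to prove the two implications separately. The direction $(2)\Rightarrow(1)$ is immediate: for $\varphi\in\mathcal{D}$ one expands
$T\ast\varphi=\sum_{j=0}^{k} f_{j}\ast\varphi^{(j)}$
and invokes item~(4) of Proposition~\ref{prop1.1-1.2-1.3} (convolution of a $\mathcal{C}_{aa}$ function with an $L^{1}$ function remains in $\mathcal{C}_{aa}$) together with item~(1) (closure under finite sums) to conclude.

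For the converse $(1)\Rightarrow(2)$, my strategy is to realise $T$ as a fixed constant-coefficient elliptic differential operator applied to a single almost automorphic function obtained by convolving $T$ with a Bessel-type parametrix. The first step will be to widen the assumption by density: since $\mathcal{D}$ is dense in $\mathcal{D}_{L^{1}}$ and $T$ is continuous on $\mathcal{D}_{L^{1}}$, the continuity estimate gives $\|T\ast(\phi-\phi_{n})\|_{L^{\infty}}\leq C\,|\phi-\phi_{n}|_{k,1}$, so that $T\ast\phi_{n}\to T\ast\phi$ uniformly whenever $\phi_{n}\in\mathcal{D}$ tends to $\phi\in\mathcal{D}_{L^{1}}$. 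Because $\mathcal{C}_{aa}$ is a Banach subspace of $\mathcal{C}_{b}$ by Proposition~\ref{prop1.1-1.2-1.3}(1), the hypothesis will extend to $T\ast\phi\in\mathcal{C}_{aa}$ for every $\phi\in\mathcal{D}_{L^{1}}$.

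Next I will pick a positive integer $m$ and the Bessel kernel $G_{m}$ with $\widehat{G_{m}}(\xi)=(1+\xi^{2})^{-m}$, which satisfies $(1-D^{2})^{m}G_{m}=\delta$, lies in $L^{1}(\mathbb{R})$, and whose derivatives $G_{m}^{(j)}$ belong to $L^{1}$ for $j\leq 2m-2$. Using the structure theorem recalled in the preliminaries I will write $T=\sum_{j=0}^{N} h_{j}^{(j)}$ with $h_{j}\in L^{\infty}$ and fix $m$ with $2m-2\geq N$. Setting $u:=T\ast G_{m}=\sum_{j=0}^{N} h_{j}\ast G_{m}^{(j)}$ yields a bounded function. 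To prove $u\in\mathcal{C}_{aa}$, I will approximate $G_{m}$ in $L^{1}$ by the mollifications $G_{m}^{\varepsilon}:=G_{m}\ast\rho_{\varepsilon}\in\mathcal{D}_{L^{1}}$: the previous step gives $T\ast G_{m}^{\varepsilon}\in\mathcal{C}_{aa}$, while
\[
\|T\ast G_{m}^{\varepsilon}-u\|_{L^{\infty}}\leq\sum_{j=0}^{N}\|h_{j}\|_{L^{\infty}}\,\|G_{m}^{(j)}\ast\rho_{\varepsilon}-G_{m}^{(j)}\|_{L^{1}}\longrightarrow 0
\]
by standard $L^{1}$ mollification, so $u$ inherits almost automorphy as the uniform limit of almost automorphic functions. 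Finally, $(1-D^{2})^{m}u=T$ in $\mathcal{D}'$, and expanding the binomial gives $T=\sum_{\ell=0}^{2m} f_{\ell}^{(\ell)}$ with each nonzero $f_{\ell}$ a scalar multiple of $u\in\mathcal{C}_{aa}$.

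The main obstacle will be the regularity gap between the hypothesis, which only allows convolution with $\mathcal{D}$, and the need to convolve with the non-smooth parametrix $G_{m}$: since $G_{m}$ fails to be $C^{\infty}$ at the origin, the hypothesis cannot be invoked on it directly. The detour via $\mathcal{D}_{L^{1}}$-density and $L^{1}$-mollification bypasses this, at the price of having to choose $m$ in accordance with the finite order $N$ of the bounded-distribution representation of $T$.
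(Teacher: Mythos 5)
Your proof is correct and takes essentially the same route as the paper: both hinge on the identity $T=(1-\frac{d^{2}}{dx^{2}})^{m}(G\ast T)$ for an iterated fundamental solution $G$ with integrable derivatives up to order $2m-2$, on showing $G\ast T\in\mathcal{C}_{aa}$ as a uniform limit of the functions $T\ast(\text{smooth approximants of }G)$ via the closedness of $\mathcal{C}_{aa}$ in $\mathcal{C}_{b}$, and on the trivial converse $T\ast\varphi=\sum f_{j}\ast\varphi^{(j)}$. The only difference is bookkeeping: you first extend the hypothesis to all of $\mathcal{D}_{L^{1}}$ by density and then control $T\ast(G_{m}\ast\rho_{\varepsilon})\rightarrow T\ast G_{m}$ through the structure theorem $T=\sum h_{j}^{(j)}$ with $h_{j}\in L^{\infty}$ (choosing $2m-2\geq N$), whereas the paper approximates $G$ directly by test functions in the finite-order space $\mathcal{D}_{L^{1}}^{2m+2}$ and uses the continuity estimate of $T$ of order $m$ (taking $p=m+2$); these are interchangeable implementations of the same approximation step.
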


\begin{proof}
$1\Rightarrow 2.~$ Let$~T\in \mathcal{D}_{L^{\infty }}^{\prime },$ then $%
\exists m\in \mathbb{Z}_{+},\exists C>0$~such that%
\begin{equation*}
\left\vert \left\langle T,\psi \right\rangle \right\vert \leq C\left\vert
\psi \right\vert _{m,1},\forall \psi \in \mathcal{D}_{L^{1}}.
\end{equation*}%
For $p\in \mathbb{N},~$we consider a fundamental solution $G~$of the
differential operator $\left( 1-\frac{d^{2}}{dx^{2}}\right) ^{p}$ which
satisfies $G\in \mathcal{C}^{2p-2}$ and with integrable derivatives~of order$%
~\leq 2p-2,$ see \cite{Schw}.~ As $G\in L^{1}$ and $T\in \mathcal{D}%
_{L^{\infty }}^{\prime },$ then $T\ast G$ exists and we have$~$%
\begin{equation*}
T=\left( 1-\frac{d^{2}}{dx^{2}}\right) ^{p}\left( G\ast T\right) .
\end{equation*}%
So $G\in \mathcal{D}_{L^{1}}^{2m+2}:=\left\{ \varphi \in \mathcal{C}%
^{2m+2}:~\forall j\leq 2m+2,~\varphi ^{\left( j\right) }\in L^{1}\right\} ,$
if $p=m+2.$ The space $\mathcal{D}_{L^{1}}^{2m+2}$\ is endowed with the \
norm $\left\vert .\right\vert _{2m+2,1}.$ It is well known that $\mathcal{D}$
is dense in $\mathcal{D}_{L^{1}}^{2m+2},$ as $\mathcal{D\subset D}%
_{L^{1}}\subset \mathcal{D}_{L^{1}}^{2m+2}\subset \mathcal{D}_{L^{1}}^{m},$
then $T~$is extended continuously to the space $\mathcal{D}_{L^{1}}^{2m+2}~$%
and there exists a sequence $\left( \theta _{k}\right) _{k\in \mathbb{N}%
}\subset \mathcal{D}$ such that $\left( \theta _{k}\right) _{k}$ converges
to $G~$in$~\mathcal{D}_{L^{1}}^{2m+2}.$ We also have
\begin{eqnarray*}
\left\vert \left( T\ast \theta _{k}\right) \left( x\right) -\left( T\ast
G\right) \left( x\right) \right\vert &=&\left\vert \left\langle T,\tau _{-x}%
\check{\theta}_{k}-\tau _{-x}\check{G}\right\rangle \right\vert , \\
&\leq &C\left\vert \theta _{k}-G\right\vert _{2m+2,1},
\end{eqnarray*}%
which gives%
\begin{equation*}
\sup_{x\in \mathbb{R}}\left\vert \left( T\ast \theta _{k}\right) \left(
x\right) -\left( T\ast G\right) \left( x\right) \right\vert \underset{%
k\rightarrow +\infty }{\longrightarrow }0,
\end{equation*}%
i.e. the sequence$~\left( T\ast \theta _{k}\right) _{k\in \mathbb{N}}$
converges uniformly to $T\ast G~$on $\mathbb{R},~$by proposition $\ref%
{prop1.1-1.2-1.3}-\left( 1\right) ~$we get $T\ast G\in \mathcal{C}_{aa}.$

$2\Rightarrow 1.~$For $\varphi \in \mathcal{D},T\ast \varphi
=\sum\limits_{j=0}^{k}f_{j}^{\left( j\right) }\ast \varphi
=\sum\limits_{j=0}^{k}f_{j}\ast \varphi ^{\left( j\right) }\in \mathcal{C}%
_{aa}~$due to to proposition$~\ref{prop1.1-1.2-1.3}-\left( 3\right) .$
\end{proof}

\begin{definition}
\label{def2}A distribution$~T\in \mathcal{D}_{L^{\infty }}^{\prime }$ is
said almost automorphic if it satisfies any (hence every) condition of the
above theorem. We denote by $\mathcal{B}_{aa}^{\prime }$ the space of all
almost automorphic distributions on $\mathbb{R}.$
\end{definition}

\begin{remark}
The theorem says that $T\in \mathcal{D}_{L^{\infty }}^{\prime }~$is almost
automorphic if and only if there exist a function $f\in \mathcal{C}_{aa}~$%
and $m\in \mathbb{Z}_{+}~$such that $T=\left( 1-\frac{d^{2}}{dx^{2}}\right)
^{m+2}f,$ where $m$\ is the order of $T.$
\end{remark}

As a consequence of the theorem, we obtain the following result.

\begin{corollary}
Let $\left( T_{m}\right) _{m\in \mathbb{N} }\subset \mathcal{B}_{aa}^{\prime
}$ and $T$ $\in \mathcal{D}_{L^{\infty }}^{\prime }$ such that$~\forall
\varphi \in \mathcal{D},\left( T_{m}\ast \varphi \right) _{m\in \mathbb{N} }$
converges uniformly to $T\ast \varphi ,$~then $T\in \mathcal{B}_{aa}^{\prime
}.$
\end{corollary}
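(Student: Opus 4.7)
The plan is to reduce the statement to the characterization of almost automorphic distributions given in Theorem~\ref{th03}. By that theorem, to conclude $T\in\mathcal{B}_{aa}^{\prime}$ it suffices to verify that $T\ast\varphi\in\mathcal{C}_{aa}$ for every $\varphi\in\mathcal{D}$.

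First, I would fix an arbitrary $\varphi\in\mathcal{D}$. Since each $T_{m}$ belongs to $\mathcal{B}_{aa}^{\prime}$, the same theorem gives $T_{m}\ast\varphi\in\mathcal{C}_{aa}$ for every $m\in\mathbb{N}$. By hypothesis the sequence $\left(T_{m}\ast\varphi\right)_{m}$ converges uniformly on $\mathbb{R}$ to $T\ast\varphi$.

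Next, I would invoke Proposition~\ref{prop1.1-1.2-1.3}-(1), which asserts that $\mathcal{C}_{aa}$ is a Banach subalgebra of $\mathcal{C}_{b}$; in particular it is closed in $\mathcal{C}_{b}$ with respect to the norm $\left\Vert .\right\Vert_{L^{\infty}}$. Therefore the uniform limit $T\ast\varphi$ of the sequence $\left(T_{m}\ast\varphi\right)_{m}\subset\mathcal{C}_{aa}$ belongs to $\mathcal{C}_{aa}$.

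Since $\varphi\in\mathcal{D}$ was arbitrary, condition (1) of Theorem~\ref{th03} holds for $T$, and hence $T\in\mathcal{B}_{aa}^{\prime}$, as required. There is no genuine obstacle here: the proof is a one-line application of the characterization combined with the closedness of $\mathcal{C}_{aa}$ in the uniform norm, and the assumption $T\in\mathcal{D}_{L^{\infty}}^{\prime}$ only ensures that $T\ast\varphi$ is a well-defined bounded continuous function to which this closedness can be applied.
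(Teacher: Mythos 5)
Your proof is correct and matches the argument the paper intends (the corollary is stated as an immediate consequence of Theorem~\ref{th03}, and the same reasoning — uniform limits of the functions $T_{m}\ast\varphi\in\mathcal{C}_{aa}$ stay in $\mathcal{C}_{aa}$ by Proposition~\ref{prop1.1-1.2-1.3}-(1), then apply the characterization — is exactly what is used there and again in the density proposition). Nothing is missing.
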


\begin{example}
Every classical almost automorphic function is an almost automorphic
distribution.
\end{example}

\begin{example}
Let $\mathcal{B}_{ap}^{\prime },$ see \cite{Schw}, be the space of almost
periodic Schwartz distributions, then we have the strict inclusion$~\mathcal{%
B}_{ap}^{\prime }\subsetneq \mathcal{B}_{aa}^{\prime }.$
\end{example}

The translate $\tau _{h}T,$ $h\in \mathbb{R},~$of a distribution $T\in $ $%
\mathcal{D}^{\prime }~$is defined by
\begin{equation*}
~\left\langle \tau _{h}T,\varphi \right\rangle =\left\langle T,\tau
_{-h}\varphi \right\rangle ,~\forall \varphi \in \mathcal{D},
\end{equation*}%
where%
\begin{equation*}
\tau _{-h}\varphi \left( x\right) =\varphi \left( x-h\right) ,~\forall x\in
\mathbb{R}.
\end{equation*}%
Denote by $\mathcal{B}_{+,0}^{\prime }$ the space of $T\in \mathcal{D}%
^{\prime }$ such that $\underset{h\rightarrow +\infty }{\lim }\tau _{h}T=0$
in$~\mathcal{D}^{\prime }.~$

The following proposition summarises the main properties of $\mathcal{B}%
_{aa}^{\prime }.$

\begin{proposition}
\label{prop1.7/1.10}

\begin{enumerate}
\item If $T$\ $\in \mathcal{B}_{aa}^{\prime },$ then$~T^{\left( i\right) }$\
$\in \mathcal{B}_{aa}^{\prime },~\forall i\in \mathbb{Z}_{+}.$

\item If $T$\ $\in \mathcal{B}_{aa}^{\prime }\cap \mathcal{B}_{+,0}^{\prime
},$ then $T=0.$

\item If $T$\ $\in \mathcal{B}_{aa}^{\prime },$ then $\tau _{h}T\in \mathcal{%
B}_{aa}^{\prime },~\forall h\in \mathbb{R}.$

\item $\mathcal{B}_{aa}^{\prime }\ast \mathcal{D}_{L^{1}}^{\prime }\subset
\mathcal{B}_{aa}^{\prime }.$

\item $\mathcal{B}_{aa}\times \mathcal{B}_{aa}^{\prime }\subset \mathcal{B}%
_{aa}^{\prime }.$
\end{enumerate}
\end{proposition}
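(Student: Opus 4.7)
The plan is to exploit the two equivalent characterizations of $\mathcal{B}_{aa}^{\prime}$ furnished by Theorem~\ref{th03}: the convolution description, $T\ast\varphi\in\mathcal{C}_{aa}$ for every $\varphi\in\mathcal{D}$, and the structural description, $T=\sum_{j=0}^{k}f_{j}^{(j)}$ with $f_{j}\in\mathcal{C}_{aa}$, choosing whichever trivializes the item at hand. Items (1) and (3) fall out directly from the structural form: differentiating gives $T^{(i)}=\sum f_{j}^{(j+i)}$, which is again a finite sum of distributional derivatives of $\mathcal{C}_{aa}$ functions, while translating gives $\tau_{h}T=\sum(\tau_{h}f_{j})^{(j)}$, with each $\tau_{h}f_{j}\in\mathcal{C}_{aa}$ by Proposition~\ref{prop1.1-1.2-1.3}(3).

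For (2) I switch to the convolution characterization. Fixing $\varphi\in\mathcal{D}$ and using the paper's translation convention, a short computation identifies $(T\ast\varphi)(x)$ with $\langle\tau_{x}T,\check{\varphi}\rangle$, where $\check{\varphi}(y)=\varphi(-y)$. Since $T\in\mathcal{B}_{+,0}^{\prime}$, $\tau_{x}T\to 0$ in $\mathcal{D}^{\prime}$ as $x\to+\infty$, so $(T\ast\varphi)(x)\to 0$. But $T\ast\varphi$ is almost automorphic, hence vanishes identically by Proposition~\ref{prop1.1-1.2-1.3}(2); evaluating at $x=0$ then yields $\langle T,\check{\varphi}\rangle=0$ for every $\varphi\in\mathcal{D}$, whence $T=0$.

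For (4), let $S\in\mathcal{D}_{L^{1}}^{\prime}$ and $\varphi\in\mathcal{D}$. The characterization of $\mathcal{D}_{L^{1}}^{\prime}$ recalled in the preliminaries gives $S\ast\varphi^{(j)}\in L^{1}$ for every $j$, and associativity of convolution combined with the structural form of $T$ produces
\[
(T\ast S)\ast\varphi \;=\; T\ast(S\ast\varphi) \;=\; \sum_{j=0}^{k} f_{j}\ast\bigl(S\ast\varphi^{(j)}\bigr),
\]
a finite sum in which each term is the convolution of a $\mathcal{C}_{aa}$ function with an $L^{1}$ function, so it belongs to $\mathcal{C}_{aa}$ by Proposition~\ref{prop1.1-1.2-1.3}(4). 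Theorem~\ref{th03} then delivers $T\ast S\in\mathcal{B}_{aa}^{\prime}$.

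The main obstacle is (5): controlling the product $\alpha T$ for $\alpha\in\mathcal{B}_{aa}$ and $T=\sum f_{j}^{(j)}\in\mathcal{B}_{aa}^{\prime}$, where the $f_{j}$ are a priori only bounded and measurable. My plan is to invoke the distributional Leibniz identity
\[
\alpha\,f_{j}^{(j)} \;=\; \sum_{i=0}^{j}\binom{j}{i}(-1)^{j-i}\bigl(\alpha^{(j-i)}f_{j}\bigr)^{(i)},
\]
which follows by induction on $j$ from $(\alpha S)^{\prime}=\alpha^{\prime}S+\alpha S^{\prime}$. Each $\alpha^{(j-i)}$ lies in $\mathcal{B}_{aa}\subset\mathcal{C}_{aa}$, and $\mathcal{C}_{aa}$ is an algebra by Proposition~\ref{prop1.1-1.2-1.3}(1), so every $\alpha^{(j-i)}f_{j}\in\mathcal{C}_{aa}$. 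Summing over $j$ and regrouping by the order of differentiation writes $\alpha T=\sum_{i}h_{i}^{(i)}$ with $h_{i}\in\mathcal{C}_{aa}$, and Theorem~\ref{th03} concludes $\alpha T\in\mathcal{B}_{aa}^{\prime}$.
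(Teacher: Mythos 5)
Your proposal is correct and, for items (1), (2), (3) and (5), it is essentially the paper's own argument: (2) via the identity $(T\ast\varphi)(x)=\langle \tau_{x}T,\check{\varphi}\rangle$ together with Proposition~\ref{prop1.1-1.2-1.3}(2), and (5) via exactly the same Leibniz regrouping $\varphi f_{j}^{(j)}=\sum_{l}(-1)^{l}\binom{j}{l}(\varphi^{(l)}f_{j})^{(j-l)}$ (your index $i=j-l$ is just a relabelling); for (3) you use the structural decomposition and translation invariance of $\mathcal{C}_{aa}$, whereas the paper uses $\tau_{h}T\ast\varphi=\tau_{h}(T\ast\varphi)$, an immaterial difference. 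The only genuinely different handling is item (4): the paper decomposes both factors, writing $S=\sum_{j}g_{j}^{(j)}$ with $g_{j}\in L^{1}$ and expanding $T\ast S=\sum_{l,j}(f_{l}\ast g_{j})^{(l+j)}$, which directly exhibits a structural representation, while you keep $S$ abstract and test $(T\ast S)\ast\varphi=\sum_{j}f_{j}\ast(S\ast\varphi^{(j)})$ using $S\ast\psi\in L^{1}$; both reduce to $\mathcal{C}_{aa}\ast L^{1}\subset\mathcal{C}_{aa}$, and your version avoids the double sum at the cost of leaning a bit more on existence and associativity of the convolution $\mathcal{D}_{L^{\infty}}^{\prime}\ast\mathcal{D}_{L^{1}}^{\prime}$. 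One small point you leave implicit there: to invoke Theorem~\ref{th03} (and Definition~\ref{def2}) you must first know $T\ast S\in\mathcal{D}_{L^{\infty}}^{\prime}$; the paper records this via $\mathcal{D}_{L^{\infty}}^{\prime}\ast\mathcal{D}_{L^{1}}^{\prime}\subset\mathcal{D}_{L^{\infty}}^{\prime}$, and in your setup it also follows at once from $(T\ast S)\ast\varphi\in\mathcal{C}_{aa}\subset L^{\infty}$ for all $\varphi\in\mathcal{D}$ and the characterization of $\mathcal{D}_{L^{\infty}}^{\prime}$ in the preliminaries, so it is worth a sentence but is not a real gap.
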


\begin{proof}
$1.~$Obvious.

$2.~$As $T\in \mathcal{B}_{+,0}^{\prime },$ then $\forall \varphi \in
\mathcal{D},\lim_{x\rightarrow +\infty }\left( T\ast \varphi \right) \left(
x\right) =\lim_{x\rightarrow +\infty }\left\langle T,\tau _{-x}\check{\varphi%
}\right\rangle =0$ and as $T$\ $\in \mathcal{B}_{aa}^{\prime },T\ast \varphi
\in \mathcal{C}_{aa},$ by proposition $\ref{prop1.1-1.2-1.3}-\left( 2\right)
,$~we get $T\ast \varphi \equiv 0,~\forall \varphi \in \mathcal{D}.$~On the
other hand $\left\langle T,\varphi \right\rangle =\left( T\ast \check{\varphi%
}\right) \left( 0\right) =0,$ so $T=0.$

$3.~$Let $T\in \mathcal{B}_{aa}^{\prime },~$then $\forall \varphi \in
\mathcal{D}~$and~$\forall h\in \mathbb{R},\tau _{h}T\ast \varphi =\tau
_{h}\left( T\ast \varphi \right) ,$as $T\ast \varphi \in \mathcal{C}_{aa}$
and$~\mathcal{C}_{aa}$ is invariant by translation,~then $\tau _{h}\left(
T\ast \varphi \right) \in \mathcal{C}_{aa},$ so $\tau _{h}T\in \mathcal{B}%
_{aa}^{\prime }.$

$4.~$Let $T\in \mathcal{B}_{aa}^{\prime }$ and $S\in \mathcal{D}%
_{L^{1}}^{\prime },$ there exist $\left( f_{j}\right) _{j\leq k}\subset
\mathcal{C}_{aa}$ such that $T=\sum\limits_{j=0}^{k}f_{j}^{\left( j\right)
}~ $and $\left( g_{j}\right) _{j\leq m}\subset L^{1}$~such that $%
S=\sum\limits_{j=0}^{m}g_{j}^{\left( j\right) },~$these give
\begin{equation*}
\left( T\ast S\right) =\sum\limits_{l=0}^{k}\sum\limits_{j=0}^{m}\left(
f_{l}\ast g_{j}\right) ^{\left( l+j\right) },
\end{equation*}%
by proposition $\ref{prop1.1-1.2-1.3}-\left( 3\right) ,~f_{i}\ast g_{j}\in
\mathcal{C}_{aa},$ since $\mathcal{D}_{L^{\infty }}^{\prime }\ast \mathcal{D}%
_{L^{1}}^{\prime }\subset \mathcal{D}_{L^{\infty }}^{\prime },$ we have $%
T\ast S\in \mathcal{B}_{aa}^{\prime }.$

$5.$~Let$~T\in \mathcal{B}_{aa}^{\prime },$ there exists $\left(
f_{j}\right) _{j\leq k}\subset \mathcal{C}_{aa},$~such that $%
T=\sum\limits_{j=0}^{k}f_{j}^{\left( j\right) }.$ For $\varphi \in \mathcal{B%
}_{aa},$ we have%
\begin{equation*}
\varphi T=\sum\limits_{j=0}^{k}\varphi f_{j}^{\left( j\right)
}=\sum\limits_{j=0}^{k}\sum\limits_{l=0}^{j}\left( -1\right) ^{l}\binom{j}{l}%
\left( \varphi ^{\left( l\right) }~f_{j}\right) ^{\left( j-l\right) },
\end{equation*}%
and since $\mathcal{C}_{aa}$ is an algebra, then $\varphi ^{\left( j\right)
}~f_{i}\in \mathcal{C}_{aa},$ hence $\varphi T\in \mathcal{B}_{aa}^{\prime
}. $
\end{proof}

The next result shows that $\mathcal{B}_{aa}$\ is dense in $\mathcal{B}%
_{aa}^{\prime }.$

\begin{proposition}
A distribution $T\in \mathcal{D}_{L^{\infty }}^{\prime }~$is almost
automorphic if and only if there exists $\left( \varphi _{m}\right) _{m\in
\mathbb{N}}\subset \mathcal{B}_{aa}$ such that $\lim\limits_{m\rightarrow
+\infty }\varphi _{m}=T$\textbf{\ }\ in $\mathcal{D}_{L^{\infty }}^{\prime
}. $
\end{proposition}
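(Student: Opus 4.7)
The plan is to prove both implications directly, using mollification for the necessity and the Corollary following Theorem~\ref{th03} for the sufficiency.

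For necessity, assume $T\in \mathcal{B}_{aa}^{\prime }$ and fix $(\rho _{m})_{m\in \mathbb{N}}\subset \mathcal{D}$ with $\rho _{m}\geq 0$, $\operatorname{supp}\rho _{m}\subset [-\tfrac{1}{m},\tfrac{1}{m}]$ and $\int \rho _{m}=1$. Set $\varphi _{m}:=T\ast \rho _{m}$. Each $\varphi _{m}$ is smooth, and by Proposition~\ref{pr02}(1) it lies in $\mathcal{B}_{aa}$ as soon as $\varphi _{m}\in \mathcal{D}_{L^{\infty }}\cap \mathcal{C}_{aa}$. The first inclusion is standard since $T\in \mathcal{D}_{L^{\infty }}^{\prime }$ and $\rho _{m}\in \mathcal{D}$ force every derivative $\varphi _{m}^{(i)}=T\ast \rho _{m}^{(i)}$ to be bounded, while the second is exactly item (1) of Theorem~\ref{th03}. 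To see $\varphi _{m}\to T$ in $\mathcal{D}_{L^{\infty }}^{\prime }$, observe that for $\psi \in \mathcal{D}_{L^{1}}$ one has $\langle \varphi _{m},\psi \rangle =\langle T,\check{\rho}_{m}\ast \psi \rangle $; differentiation under the convolution combined with the usual $L^{1}$ approximate-identity argument gives $\check{\rho}_{m}\ast \psi \to \psi $ in $\mathcal{D}_{L^{1}}$, and the continuity of $T$ on $\mathcal{D}_{L^{1}}$ closes the loop.

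For sufficiency, let $(\varphi _{m})\subset \mathcal{B}_{aa}$ with $\varphi _{m}\to T$ in $\mathcal{D}_{L^{\infty }}^{\prime }$. Each $\varphi _{m}$ belongs to $\mathcal{B}_{aa}^{\prime }$ (a classical almost automorphic function is an almost automorphic distribution), so by the Corollary that follows Theorem~\ref{th03} it suffices to prove $\varphi _{m}\ast \varphi \to T\ast \varphi $ uniformly on $\mathbb{R}$ for every $\varphi \in \mathcal{D}$. Writing $((\varphi _{m}-T)\ast \varphi )(x)=\langle \varphi _{m}-T,\tau _{x}\check{\varphi}\rangle $, the family $\{\tau _{x}\check{\varphi}:x\in \mathbb{R}\}$ is bounded in every seminorm $|\cdot |_{k,1}$ of $\mathcal{D}_{L^{1}}$ because translations are $L^{1}$-isometries. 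Interpreting convergence in $\mathcal{D}_{L^{\infty }}^{\prime }$ as strong dual convergence---the natural Schwartz convention for $\mathcal{B}^{\prime }=\mathcal{D}_{L^{\infty }}^{\prime }$---we obtain $\sup_{x\in \mathbb{R}}|\langle \varphi _{m}-T,\tau _{x}\check{\varphi}\rangle |\to 0$, which is the required uniform convergence.

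The step I expect to be the main obstacle is securing this uniform-in-$x$ estimate in the sufficiency part: pointwise convergence comes for free from the hypothesis, but promoting it to uniformity on $\mathbb{R}$ depends critically on the topology chosen for $\mathcal{D}_{L^{\infty }}^{\prime }$. If only weak-$\ast$ convergence were assumed, one would first invoke the Banach--Steinhaus theorem to produce an equicontinuous majorant for $(\varphi _{m})$ on $\mathcal{D}_{L^{1}}$ and then combine this with the boundedness of the translate family $\{\tau _{x}\check{\varphi}\}_{x}$ to recover the same uniform bound.
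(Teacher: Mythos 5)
Your overall strategy coincides with the paper's: mollify $T$ by $\rho _{m}$ for the necessity, and for the sufficiency use strong dual convergence tested against the bounded family of translates $\left\{ \tau _{-x}\check{\varphi}:x\in \mathbb{R}\right\} \subset \mathcal{D}_{L^{1}}$ together with the stability of $\mathcal{C}_{aa}$ under uniform limits. The sufficiency half of your argument is exactly the paper's proof. The gap is in the necessity half: you correctly choose $\varphi _{m}=T\ast \rho _{m}\in \mathcal{B}_{aa}$, but what you verify is only that $\left\langle \varphi _{m},\psi \right\rangle \rightarrow \left\langle T,\psi \right\rangle $ for each fixed $\psi \in \mathcal{D}_{L^{1}}$, i.e.\ weak-$\ast $ convergence. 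Since your own sufficiency argument (and the paper's reading of the statement) requires convergence in the \emph{strong} topology of $\mathcal{D}_{L^{\infty }}^{\prime }$, the two halves as written do not prove the same equivalence. The missing step is quantitative: from $\left\vert \left\langle T,\chi \right\rangle \right\vert \leq C\left\vert \chi \right\vert _{l,1}$ one must show $\sup_{\psi \in U}\left\vert \check{\rho}_{m}\ast \psi -\psi \right\vert _{l,1}\rightarrow 0$ for every bounded $U\subset \mathcal{D}_{L^{1}}$; the paper gets this from the mean value theorem and Minkowski's inequality, which yield $\left\vert \check{\rho}_{m}\ast \psi -\psi \right\vert _{l,1}\leq \frac{1}{m}\left\vert \psi \right\vert _{l+1,1}$, hence a bound $\frac{MC}{m}$ uniform over $U$. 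Adding this estimate closes your necessity argument; the "usual approximate-identity argument" applied one $\psi $ at a time does not.

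Your closing remark about rescuing a weak-$\ast $ hypothesis via Banach--Steinhaus does not work as stated. Equicontinuity of $\left( \varphi _{m}-T\right) $ plus pointwise convergence upgrades the convergence only to uniformity on \emph{precompact} subsets of $\mathcal{D}_{L^{1}}$, and the translate family $\left\{ \tau _{-x}\check{\varphi}:x\in \mathbb{R}\right\} $ is bounded but not precompact there (translates by large $x$ have no convergent subsequence). Indeed, weak-$\ast $ and strong convergence genuinely differ in $\mathcal{D}_{L^{\infty }}^{\prime }$: the Diracs $\delta _{m}$ tend to $0$ weak-$\ast $ but $\sup_{x}\left\vert \left\langle \delta _{m},\tau _{-x}\check{\varphi}\right\rangle \right\vert =\left\Vert \varphi \right\Vert _{L^{\infty }}$ does not tend to $0$. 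So the strong-topology formulation is not a dispensable convention here; it is what makes the uniform-in-$x$ step, and hence the whole equivalence, go through.
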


\begin{proof}
Suppose that there exists $\left( \varphi _{m}\right) _{m\in \mathbb{N}
}\subset \mathcal{B}_{aa}$ such that $\lim\limits_{m\rightarrow +\infty
}\varphi _{m}=T~\ $in $\mathcal{D}_{L^{\infty }}^{\prime }.$ By the strong
topology of $\mathcal{D}_{L^{\infty }}^{\prime },$ for any bounded subset $B%
\mathcal{\subset D}_{L^{1}}$ we have%
\begin{equation*}
\sup_{\psi \in B}\left\vert \left\langle \varphi _{m}-T,\psi \right\rangle
\right\vert \underset{m\rightarrow +\infty }{\longrightarrow }0.
\end{equation*}%
For a fixed $\varphi \in \mathcal{D},~$the set $B:\mathcal{=}\left\{ \tau
_{-x}\check{\varphi}:x\in \mathbb{R}\right\} $ is bounded in $\mathcal{D}%
_{L^{1}},~$so

\begin{eqnarray*}
\sup_{x\in \mathbb{R}}\left\vert \left( \varphi _{m}\ast \varphi \right)
\left( x\right) -\left( T\ast \varphi \right) \left( x\right) \right\vert
&=&\sup_{x\in \mathbb{R}}\left\vert \left\langle \varphi _{m}-T,\tau _{-x}%
\check{\varphi}\right\rangle \right\vert \\
&=&\sup_{\psi \in B}\left\vert \left\langle \varphi _{m}-T,\psi
\right\rangle \right\vert \underset{m\rightarrow +\infty }{\longrightarrow }%
0,
\end{eqnarray*}%
i.e. the sequence of functions $\left( \varphi _{m}\ast \varphi \right)
_{m\in \mathbb{N} }$ $\subset \mathcal{C}_{aa}~$converges uniformly to $%
\left( T\ast \varphi \right) ,~$by propositions$~\ref{prop1.1-1.2-1.3}%
-\left( 1\right) ,~T\ast \varphi \in \mathcal{C}_{aa},~\forall $ $\varphi
\in \mathcal{D},$ so $T\in \mathcal{B}_{aa}^{\prime }.$

Conversely,$~$let $T\in \mathcal{B}_{aa}^{\prime }$ and take a sequence of
positive test functions $\left( \rho _{m}\right) _{m\in \mathbb{N}}\subset
\mathcal{D}$ such that supp$\rho _{m}\subset \left[ -\frac{1}{m},\frac{1}{m}%
\right] $ and$~\int\limits_{\mathbb{R}}\rho _{m}\left( x\right) dx=1.~$%
Define $\varphi _{m}:=\rho _{m}\ast T\in \mathcal{B}_{aa}.$ We claim that
for any bounded set $U$ of $\mathcal{D}_{L^{1}},~\sup\limits_{\varphi \in
U}\left\vert \left\langle \varphi _{m}-T,\varphi \right\rangle \right\vert
\underset{m\rightarrow +\infty }{\longrightarrow }0$. Indeed, since $T\in
\mathcal{D}_{L^{\infty }}^{\prime },~\exists ~l\in \mathbb{Z}_{+},~\exists
C>0,\left\vert \left\langle T,\varphi \right\rangle \right\vert \leq
C\left\vert \varphi \right\vert _{l,1},~\forall \varphi \in \mathcal{D}%
_{L^{1}},$ and then~%
\begin{equation*}
\left\vert \left\langle \varphi _{m}-T,\varphi \right\rangle \right\vert
=\left\vert \left\langle T,\check{\rho}_{m}\ast \varphi -\varphi
\right\rangle \right\vert \leq C\left\vert \check{\rho}_{m}\ast \varphi
-\varphi \right\vert _{l,1},~\forall \varphi \in \mathcal{D}_{L^{1}}.
\end{equation*}%
On the other hand,%
\begin{equation*}
\left( \check{\rho}_{m}\ast \varphi \right) ^{\left( i\right) }\left(
x\right) -\varphi ^{\left( i\right) }\left( x\right) =\int\limits_{\mathbb{R}%
}\check{\rho}_{m}\left( y\right) \left( \varphi ^{\left( i\right) }\left(
x-y\right) -\varphi ^{\left( i\right) }\left( x\right) \right) dy,
\end{equation*}%
by Minkowski inequality and the mean value theorem we obtain for a $t\in %
\left] 0,1\right[ ,$%
\begin{eqnarray*}
\left\Vert \left( \check{\rho}_{m}\ast \varphi \right) ^{\left( i\right)
}-\varphi ^{\left( i\right) }\right\Vert _{L^{1}} &=&\int\limits_{\mathbb{R}%
}\left\vert \int\limits_{\mathbb{R}}\check{\rho}_{m}\left( y\right) \left(
\varphi ^{\left( i\right) }\left( x-y\right) -\varphi ^{\left( i\right)
}\left( x\right) \right) dy\right\vert dx \\
&\leq &\int\limits_{\left[ \frac{-1}{m},\frac{1}{m}\right] }\check{\rho}%
_{m}\left( y\right) \int\limits_{\mathbb{R}}\left\vert y\right\vert
\left\vert \varphi ^{\left( i+1\right) }\left( x+\left( t-1\right) y\right)
\right\vert dxdy, \\
&\leq &\int\limits_{\left[ \frac{-1}{m},\frac{1}{m}\right] }\left\vert
y\right\vert \check{\rho}_{m}\left( y\right) \int\limits_{\mathbb{R}%
}\left\vert \varphi ^{\left( i+1\right) }\left( z\right) \right\vert dzdy, \\
&\leq &\frac{1}{m}\left\Vert \varphi ^{\left( i+1\right) }\right\Vert
_{L^{1}},~
\end{eqnarray*}%
hence$~$%
\begin{equation*}
\left\vert \left\langle \varphi _{m}-T,\varphi \right\rangle \right\vert
\leq C\left\vert \check{\rho}_{m}\ast \varphi -\varphi \right\vert
_{l,1}\leq \frac{C}{m}\left\vert \varphi \right\vert _{l+1,1},\forall
\varphi \in \mathcal{D}_{L^{1}}.
\end{equation*}%
Take $U$ a bounded~set in $\mathcal{D}_{L^{1}},\ $then $\exists M,~\forall
\varphi \in $ $U,$ $\left\vert \varphi \right\vert _{l+1,1}\leq M,$
consequently we obtain
\begin{equation*}
\sup\limits_{\varphi \in U}\left\vert \left\langle \varphi _{m}-T,\varphi
\right\rangle \right\vert \leq \frac{MC}{m}\underset{m\rightarrow +\infty }{%
\longrightarrow }0,~
\end{equation*}%
which gives the conclusion.
\end{proof}

The following result is an extension to almost automorphic distributions~of
the classical result of Bohl-Bohr on primitives.

\begin{proposition}
\label{prop1.11}A primitive$~$of an almost automorphic distribution is
almost automorphic if and only if it is bounded$.$
\end{proposition}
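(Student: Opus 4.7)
The necessity is essentially by definition: any primitive $S$ of $T \in \mathcal{B}_{aa}'$ that is almost automorphic lies in $\mathcal{B}_{aa}' \subset \mathcal{D}_{L^{\infty}}'$, which is exactly what boundedness of a distribution means. So the real content is the sufficiency, and my plan is to reduce it to the scalar Bohl--Bohr type result already recorded as Proposition \ref{th02}, using the convolution characterization of Theorem \ref{th03} as the bridge.

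Concretely, I would fix $T \in \mathcal{B}_{aa}'$ and a primitive $S \in \mathcal{D}'$ of $T$ (so $S' = T$) that is bounded as a distribution, i.e.\ $S \in \mathcal{D}_{L^{\infty}}'$. To show $S \in \mathcal{B}_{aa}'$, by Theorem \ref{th03} it suffices to prove that $S \ast \varphi \in \mathcal{C}_{aa}$ for every $\varphi \in \mathcal{D}$. For such $\varphi$, two facts are immediate:
\begin{equation*}
(S \ast \varphi)' = S' \ast \varphi = T \ast \varphi \in \mathcal{C}_{aa},
\end{equation*}
the last membership being exactly Theorem \ref{th03} applied to $T$; and $S \ast \varphi$ is a smooth function (convolution of a distribution with a test function) which moreover is bounded on $\mathbb{R}$ because $S \in \mathcal{D}_{L^{\infty}}'$.

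Thus $S \ast \varphi$ is a bounded primitive of the almost automorphic function $T \ast \varphi$, so Proposition \ref{th02} gives $S \ast \varphi \in \mathcal{C}_{aa}$. Since $\varphi \in \mathcal{D}$ was arbitrary, Theorem \ref{th03} yields $S \in \mathcal{B}_{aa}'$, completing the sufficiency direction.

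I do not expect a genuine obstacle in this argument: once one notices that differentiation and convolution commute with the primitive relation, the result follows by transporting the scalar Bohl--Bohr statement through the convolution characterization of almost automorphic distributions. The only minor point worth mentioning explicitly is the use of $S \in \mathcal{D}_{L^{\infty}}'$ to ensure $S \ast \varphi$ is a bounded continuous function, which is the precise distributional meaning of ``bounded'' and which is exactly what Proposition \ref{th02} needs as an input.
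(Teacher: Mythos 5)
Your proposal is correct and follows essentially the same route as the paper: convolve with a test function $\varphi\in\mathcal{D}$, note that $S\ast\varphi$ is a bounded primitive of $T\ast\varphi\in\mathcal{C}_{aa}$, apply the scalar result of Proposition \ref{th02}, and conclude via the characterization in Theorem \ref{th03}. No gaps; the handling of the necessity direction and of boundedness of $S\ast\varphi$ matches the paper's argument.
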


\begin{proof}
$~~$If $T\in \mathcal{B}_{aa}^{\prime }$ and its primitive $S\in \mathcal{B}%
_{aa}^{\prime },$ it is clear that$~S\in \mathcal{D}_{L^{\infty }}^{\prime
}. $ Conversely, let $S\in \mathcal{D}_{L^{\infty }}^{\prime }$ be a
primitive of $T\in \mathcal{B}_{aa}^{\prime },~$i.e. $S^{\prime }=T,$ so$%
~S\ast \varphi \in L^{\infty },~\forall \varphi \in \mathcal{D},~$and
\begin{equation*}
\left( S\ast \varphi \right) ^{\prime }=S^{\prime }\ast \varphi =T\ast
\varphi \in \mathcal{C}_{aa},~\forall \varphi \in \mathcal{D},
\end{equation*}%
i.e. $\left( S\ast \varphi \right) ~$is a bounded primitive of the almost
automorphic$~$function $\left( T\ast \varphi \right) ,~$thus by proposition $%
\ref{th02},$~$S\ast \varphi \in \mathcal{C}_{aa},\forall \varphi \in
\mathcal{D},$ so $S\in \mathcal{B}_{aa}^{\prime }.$
\end{proof}

\section{Bochner almost automorphy of distributions}

This section gives a characterization of almost automorphic distributions in
the spirit of the topological definition of almost automorphy of a scalar
function given by S. Bochner. We also show that almost automorphic functions
in the sense of Stepanov are almost automorphic distributions.

\begin{theorem}
\label{prop1.8}Let $T\in \mathcal{D}_{L^{\infty }}^{\prime },~$the following
propositions are equivalent :

\begin{enumerate}
\item There exists $\left( f_{j}\right) _{j\leq n}\subset \mathcal{C}_{aa}$
such that~$T=\sum\limits_{j=0}^{n}f_{j}^{\left( j\right) }.$

\item For every sequence $\left( s_{m}\right) _{m\in \mathbb{N} }\subset
\mathbb{R},$~there exists a subsequence $\left( s_{m_{k}}\right) _{k}~$such
that%
\begin{equation*}
S:=\lim_{k\rightarrow +\infty }\tau _{s_{m_{k}}}T~~\text{exists in }\mathcal{%
D}^{\prime },
\end{equation*}%
and%
\begin{equation*}
\lim_{l\rightarrow +\infty }\tau _{-s_{m_{l}}}S=T~\text{in }\mathcal{D}%
^{\prime }.
\end{equation*}

\item $T\ast \varphi \in \mathcal{C}_{aa},\forall \varphi \in \mathcal{D}.$
\end{enumerate}
\end{theorem}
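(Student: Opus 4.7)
The plan is to invoke Theorem \ref{th03} for $1\Leftrightarrow 3$ and to prove the two remaining implications $3\Rightarrow 2$ and $2\Rightarrow 3$. The key scalar tool is Proposition \ref{prop1.4} (Bochner's double-limit characterization of $\mathcal{C}_{aa}$), and the key bridge between the pointwise and distributional pictures is the identity
$$\langle \tau_h T,\varphi\rangle = (T\ast\check\varphi)(-h),$$
which transmutes pointwise almost automorphy of the regularization $T\ast\check\varphi$ into convergence of $\tau_h T$ tested against $\varphi$.

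For $3\Rightarrow 2$, start from a sequence $(s_m)\subset\mathbb{R}$ and fix a countable family $\{\varphi_n\}\subset\mathcal{D}$ that is dense in $\mathcal{D}_{L^1}$. Each $f_n:=T\ast\check\varphi_n$ lies in $\mathcal{C}_{aa}$ by hypothesis 3. Applying Proposition \ref{prop1.4} to $f_n$ with the reversed sequence $(-s_m)$ and extracting diagonally produces a subsequence $(s_{m_k})$ such that, for every $n$ and every $x\in\mathbb{R}$, both $g_n(x):=\lim_k f_n(x-s_{m_k})$ exists and $\lim_l g_n(x+s_{m_l})=f_n(x)$. Setting $x=0$ yields that $\lim_k\langle\tau_{s_{m_k}}T,\varphi_n\rangle$ exists and $\lim_l\lim_k\langle\tau_{s_{m_k}-s_{m_l}}T,\varphi_n\rangle=\langle T,\varphi_n\rangle$. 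Since $T\in\mathcal{D}_{L^\infty}'$, there is an estimate $|\langle T,\psi\rangle|\le C|\psi|_{l,1}$, whose translation-invariance makes $\{\tau_hT\}_{h\in\mathbb{R}}$ equicontinuous on $\mathcal{D}_{L^1}$; a three-epsilon argument then extends the Cauchy property of $\langle\tau_{s_{m_k}}T,\,\cdot\,\rangle$ from $\{\varphi_n\}$ to all $\varphi\in\mathcal{D}$, defining $S\in\mathcal{D}_{L^\infty}'$ with $\tau_{s_{m_k}}T\to S$ in $\mathcal{D}'$. The same bound passes to the limit to give $\{\tau_{-s_{m_l}}S\}_l$ equicontinuous on $\mathcal{D}_{L^1}$, which extends the second iterated limit from the dense family to all $\varphi\in\mathcal{D}$, completing the proof that $\tau_{-s_{m_l}}S\to T$ in $\mathcal{D}'$.

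For $2\Rightarrow 3$, fix $\varphi\in\mathcal{D}$ and verify the definition of almost automorphy directly for $T\ast\varphi$. Given $(s_m)$, apply hypothesis 2 to the reversed sequence $(-s_m)$ to extract a subsequence $(s_{m_k})$ with $\tau_{-s_{m_k}}T\to S$ and $\tau_{s_{m_l}}S\to T$ in $\mathcal{D}'$. Using the identity $(T\ast\varphi)(x+s_{m_k})=\langle\tau_{-s_{m_k}}T,\tau_x\check\varphi\rangle$, the pointwise limit $g(x):=\lim_k(T\ast\varphi)(x+s_{m_k})=(S\ast\varphi)(x)$ exists for every $x\in\mathbb{R}$, and analogously $\lim_l g(x-s_{m_l})=\lim_l\langle\tau_{s_{m_l}}S,\tau_x\check\varphi\rangle=(T\ast\varphi)(x)$, so both Bochner conditions hold for $T\ast\varphi$ and hence $T\ast\varphi\in\mathcal{C}_{aa}$.

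The principal obstacle is the simultaneous diagonal extraction together with the equicontinuity argument in $3\Rightarrow 2$: a single subsequence must serve every $\varphi\in\mathcal{D}$ at once for both the first limit and the iterated one, and it is the uniform estimate $|\langle\tau_hT,\psi\rangle|\le C|\psi|_{l,1}$, together with its transfer to $S$, that enables the passage from a countable dense family to all test functions. The sign mismatch between the distributional formulation in 2 and the scalar Bochner formulation in Proposition \ref{prop1.4} is innocuous and is handled by applying the scalar result to $(-s_m)$ in one direction and hypothesis 2 to $(-s_m)$ in the other.
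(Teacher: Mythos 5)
Your proof is correct, but it follows a genuinely different route from the paper's for the key implication. The paper closes the cycle as $1\Rightarrow 2\Rightarrow 3\Rightarrow 1$: to get statement 2 it starts from the representation $T=\sum_{j\le n}f_j^{(j)}$ with $f_j\in\mathcal{C}_{aa}$, extracts one subsequence working for the finitely many $f_j$ by iterated extraction, and passes to the limit by dominated convergence, so that $S=\sum_j g_j^{(j)}$ is obtained explicitly; the step $3\Rightarrow 1$ is delegated to Theorem \ref{th03}, and $2\Rightarrow 3$ is the same direct computation you give. You instead take $1\Leftrightarrow 3$ from Theorem \ref{th03} and prove $3\Rightarrow 2$ directly: almost automorphy of the regularizations $T\ast\check\varphi_n$ for a countable family $\{\varphi_n\}\subset\mathcal{D}$ dense in $\mathcal{D}_{L^1}$, a diagonal extraction, and the translation-invariant bound $\vert\langle \tau_hT,\psi\rangle\vert\le C\vert\psi\vert_{l,1}$ (which also passes to $S$) to upgrade convergence from the dense family to all test functions. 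This is a legitimate alternative: it constructs $S$ abstractly without using the representation for that implication, at the cost of invoking separability of $\mathcal{D}_{L^1}$ and an equicontinuity/three-epsilon argument (essentially the technique the paper uses in its density proposition), whereas the paper's route is more finitary and yields $S$ in explicit form as a finite sum of derivatives of the scalar limit functions $g_j$. Two small bookkeeping remarks: under the paper's conventions ($\langle\tau_hT,\varphi\rangle=\langle T,\tau_{-h}\varphi\rangle$ and $(T\ast\varphi)(x)=\langle T,\tau_{-x}\check\varphi\rangle$) the correct identity is $\langle\tau_hT,\varphi\rangle=(T\ast\check\varphi)(h)$, not $(T\ast\check\varphi)(-h)$, so no reversal of the sequence is actually needed in either direction — as you anticipated, this only relabels the limits and does not affect the argument; and what you actually use for the scalar functions is the defining property \eqref{equ5}--\eqref{equ6} (existence of $g_n$ together with the return limit) rather than the double-limit criterion of Proposition \ref{prop1.4}, so the citation should point to the definition.
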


\begin{proof}
$1\Rightarrow 2.~$Let $\left( f_{j}\right) _{j\leq n}\subset \mathcal{C}%
_{aa} $ and~$T=\sum\limits_{j=0}^{n}f_{j}^{\left( j\right) },$ for every
sequence $\left( s_{m}\right) _{m\in \mathbb{N}}\subset \mathbb{R},$~there
is a subsequence $\left( s_{m_{k_{1}}}\right) _{k_{1}}$ such that$~\forall
x\in \mathbb{R},$
\begin{equation*}
\lim\limits_{k_{1}\rightarrow +\infty }f_{1}\left( x+s_{m_{k_{1}}}\right)
=g_{1}\left( x\right) ~\text{and }\lim\limits_{k_{1}\rightarrow +\infty
}g_{1}\left( x-s_{m_{k_{1}}}\right) =f_{1}\left( x\right) ,\text{ }
\end{equation*}%
since $f_{2}$ is also an almost automorphic,~one extracts a subsequence $%
\left( s_{m_{k_{2}}}\right) _{k_{2}}$ of $\left( s_{m_{k_{1}}}\right)
_{k_{1}}$ such that $\forall x\in \mathbb{R},$
\begin{equation*}
\lim\limits_{_{k_{2}}\rightarrow +\infty }f_{2}\left( x+s_{m_{k_{2}}}\right)
=g_{2}\left( x\right) ~\text{and }\lim\limits_{_{k_{2}}\rightarrow +\infty
}g_{2}\left( x-s_{m_{k_{2}}}\right) =f_{2}\left( x\right) ,\text{ }
\end{equation*}%
and also,
\begin{equation*}
\lim\limits_{_{k_{2}}\rightarrow +\infty }f_{1}\left( x+s_{m_{k_{2}}}\right)
=g_{1}\left( x\right) \text{ and }\lim\limits_{_{k_{2}}\rightarrow +\infty
}g_{1}\left( x-s_{m_{k_{2}}}\right) =f_{1}\left( x\right) ,
\end{equation*}%
hence by iterating this process, we can extract a subsequence $\left(
s_{m_{k_{n}}}\right) _{_{k_{n}}}$ of $\left( s_{m_{k_{n-1}}}\right)
_{k_{n-1}},$~which is also a subsequence of $\left( s_{m}\right) _{m\in
\mathbb{N}}~$such that~$\forall x\in \mathbb{R},$%
\begin{equation*}
\lim\limits_{_{k_{n}}\rightarrow +\infty }f_{i}\left( x+s_{m_{k_{n}}}\right)
=g_{i}\left( x\right) ~\text{and }\lim\limits_{_{_{k_{n}}}\rightarrow
+\infty }g_{i}\left( x-s_{m_{k_{n}}}\right) =f_{i}\left( x\right)
,~i=1,...,n.
\end{equation*}%
On the other hand, the function$~f_{i}$ is bounded and $g_{i}\in
L_{loc}^{1}, $~so $\forall \varphi \in \mathcal{D},$
\begin{equation*}
\left\langle \tau _{s_{m_{k_{n}}}}T,\varphi \right\rangle
=\sum\limits_{i=0}^{n}\left( -1\right) ^{i}\int\limits_{\mathbb{R}%
}f_{i}\left( x+s_{m_{k_{n}}}\right) \varphi ^{\left( i\right) }\left(
x\right) dx,
\end{equation*}%
then by Lebesgue's dominated convergence theorem, we obtain
\begin{eqnarray*}
\lim_{_{k_{n}}\rightarrow +\infty }\left\langle \tau
_{s_{m_{k_{n}}}}T,\varphi \right\rangle &=&\sum\limits_{i=0}^{n}\left(
-1\right) ^{i}\int_{\mathbb{R}}g_{i}\left( x\right) \varphi ^{\left(
i\right) }\left( x\right) dx, \\
&=&\left\langle \sum\limits_{i=0}^{n}g_{i}^{\left( i\right) },\varphi
\right\rangle , \\
&=&\left\langle S,\varphi \right\rangle ,
\end{eqnarray*}%
where $S:=\sum\limits_{i=0}^{n}g_{i}^{\left( i\right) }\in \mathcal{D}%
^{\prime }.~$In the same way, since the function $g_{i}$ is measurable and
bounded, we obtain $\forall \varphi \in \mathcal{D},$%
\begin{eqnarray*}
\lim_{_{k_{n}}\rightarrow +\infty }\left\langle \tau
_{-s_{m_{k_{n}}}}S,\varphi \right\rangle &=&\sum\limits_{i=0}^{n}\left(
-1\right) ^{i}\int_{\mathbb{R}}\lim_{_{k_{n}}\rightarrow +\infty
}g_{i}\left( x-s_{m_{k_{n}}}\right) \varphi ^{\left( i\right) }\left(
x\right) dx, \\
&=&\left\langle \sum\limits_{i=0}^{n}f_{i}^{\left( i\right) },\varphi
\right\rangle , \\
&=&\left\langle T,\varphi \right\rangle ,
\end{eqnarray*}%
i.e.%
\begin{equation*}
\lim_{_{k_{n}}\rightarrow +\infty }\tau _{-s_{m_{k_{n}}}}S=T\text{ in }%
\mathcal{D}^{\prime }.
\end{equation*}

$2\Rightarrow 3.~~$Let $\varphi \in \mathcal{D},$ for any sequence $\left(
s_{m}\right) _{m\in \mathbb{N} }\subset \mathbb{R},$ one can extract a
subsequence $\left( s_{m_{k}}\right) _{k\in \mathbb{N} }$ such that$~\forall
x\in \mathbb{R},$ we have
\begin{eqnarray*}
\lim_{k\rightarrow +\infty }\left( T\ast \varphi \right) \left(
x+s_{m_{k}}\right) &=&\lim_{k\rightarrow +\infty }\left\langle \tau
_{s_{m_{k}}}T,\tau _{-x}\check{\varphi}\right\rangle , \\
&=&\left\langle S,\tau _{-x}\check{\varphi}\right\rangle , \\
&=&\left( S\ast \varphi \right) \left( x\right) ,
\end{eqnarray*}%
and in the same way $\forall x\in \mathbb{R},$

\begin{eqnarray*}
\lim\limits_{k\rightarrow +\infty }\left( S\ast \varphi \right) \left(
x-s_{m_{k}}\right) &=&\lim_{k\rightarrow +\infty }\left\langle \tau
_{-s_{m_{k}}}S,\tau _{-x}\check{\varphi}\right\rangle , \\
&=&\left\langle T,\tau _{-x}\check{\varphi}\right\rangle , \\
&=&\left( T\ast \varphi \right) \left( x\right) ,~
\end{eqnarray*}%
i.e. $T\ast \varphi \in \mathcal{C}_{aa},~\forall \varphi \in \mathcal{D}$.

$3\Rightarrow 1.~$Follows from theorem \ref{th03}.
\end{proof}

The equivalent properties established in the last theorem lead to a Bochner
type definition of an almost automorphic distribution, i.e. a distribution $%
T\in \mathcal{D}_{L^{\infty }}^{\prime }~$is an almost automorphic
distribution if for every sequence $\left( s_{m}\right) _{m\in \mathbb{N}%
}\subset \mathbb{R},$ there is a subsequence $\left( s_{m_{k}}\right) _{k}~$%
such that%
\begin{equation*}
S:=\lim_{k\rightarrow +\infty }\tau _{s_{m_{k}}}T~~\text{exists in }\mathcal{%
D}^{\prime },
\end{equation*}%
and%
\begin{equation*}
\lim_{k\rightarrow +\infty }\tau _{-s_{m_{k}}}S=T~\text{in }\mathcal{D}%
^{\prime }.
\end{equation*}%
We have also an extension of the proposition $\ref{prop1.4}$ to
distributions.

\begin{proposition}
\label{prop1.9}Let $T\in \mathcal{D}_{L^{\infty }}^{\prime },$ then $T\in
\mathcal{B}_{aa}^{\prime }$ if and only if for every sequence $\left(
s_{m}\right) _{m\in \mathbb{N} }\subset \mathbb{R},$~there exists a
subsequence $\left( s_{m_{k}}\right) _{k}~~$such that
\begin{equation}
\lim_{l\rightarrow +\infty }\lim_{k\rightarrow +\infty }\tau
_{-s_{m_{l}}}\tau _{s_{m_{k}}}T=T~\text{in}~\mathcal{D}^{\prime }.
\label{equa4}
\end{equation}
\end{proposition}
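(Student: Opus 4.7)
The plan is to reduce to Theorem \ref{prop1.8} and Proposition \ref{prop1.4}, so that the double limit statement in $\mathcal{D}'$ is tested against suitable elements of $\mathcal{D}$ and translated into the Bochner double limit characterization of the scalar almost automorphic functions $T\ast\varphi$.

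\smallskip

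For the direct implication, suppose $T\in\mathcal{B}_{aa}^{\prime}$. Given a sequence $(s_{m})_{m\in\mathbb{N}}\subset\mathbb{R}$, I would apply Theorem \ref{prop1.8}(2) to extract a subsequence $(s_{m_{k}})_{k}$ such that $S:=\lim_{k\to+\infty}\tau_{s_{m_{k}}}T$ exists in $\mathcal{D}^{\prime}$ and $\lim_{l\to+\infty}\tau_{-s_{m_{l}}}S=T$ in $\mathcal{D}^{\prime}$. Since for every fixed $h\in\mathbb{R}$ the translation $\tau_{h}:\mathcal{D}^{\prime}\to\mathcal{D}^{\prime}$ is continuous (it is the adjoint of $\tau_{-h}$ on $\mathcal{D}$), for each fixed $l$ I obtain $\lim_{k\to+\infty}\tau_{-s_{m_{l}}}\tau_{s_{m_{k}}}T=\tau_{-s_{m_{l}}}S$ in $\mathcal{D}^{\prime}$. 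Letting $l\to+\infty$ then gives $\lim_{l\to+\infty}\lim_{k\to+\infty}\tau_{-s_{m_{l}}}\tau_{s_{m_{k}}}T=T$ in $\mathcal{D}^{\prime}$, which is exactly (\ref{equa4}).

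\smallskip

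For the converse, assume that condition (\ref{equa4}) holds. By Theorem \ref{th03} (or, equivalently, condition (3) in Theorem \ref{prop1.8}), it is enough to establish $T\ast\varphi\in\mathcal{C}_{aa}$ for every $\varphi\in\mathcal{D}$. Fix such a $\varphi$; because $T\in\mathcal{D}_{L^{\infty}}^{\prime}$, the function $T\ast\varphi$ is smooth and bounded, hence continuous. The key computational identity I would use is
\begin{equation*}
\bigl(\tau_{-s_{m_{l}}}\tau_{s_{m_{k}}}T\ast\varphi\bigr)(x)=\bigl\langle\tau_{-s_{m_{l}}}\tau_{s_{m_{k}}}T,\tau_{-x}\check{\varphi}\bigr\rangle=(T\ast\varphi)(x+s_{m_{k}}-s_{m_{l}}),
\end{equation*}
which is an immediate consequence of the elementary relation $(\tau_{h}T\ast\varphi)(x)=(T\ast\varphi)(x+h)$. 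Given any sequence $(s_{m})\subset\mathbb{R}$, the hypothesis provides a subsequence $(s_{m_{k}})_{k}$ for which (\ref{equa4}) holds; testing this convergence in $\mathcal{D}^{\prime}$ against the test function $\tau_{-x}\check{\varphi}$ for arbitrary $x\in\mathbb{R}$ gives
\begin{equation*}
\lim_{l\to+\infty}\lim_{k\to+\infty}(T\ast\varphi)(x+s_{m_{k}}-s_{m_{l}})=(T\ast\varphi)(x),\quad\forall x\in\mathbb{R}.
\end{equation*}
Proposition \ref{prop1.4} applied to the continuous function $T\ast\varphi$ then yields $T\ast\varphi\in\mathcal{C}_{aa}$, and since $\varphi$ was arbitrary we conclude $T\in\mathcal{B}_{aa}^{\prime}$.

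\smallskip

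The only delicate point I anticipate is making sure that the $\mathcal{D}^{\prime}$-convergence in (\ref{equa4}) correctly transfers to the pointwise double limit required by Proposition \ref{prop1.4}; this is handled cleanly by the identity $(\tau_{h}T\ast\varphi)(x)=(T\ast\varphi)(x+h)$ together with the fact that pairing with a fixed test function is a continuous linear functional on $\mathcal{D}^{\prime}$, so no further passage to subsequences or diagonal argument is needed.
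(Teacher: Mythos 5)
Your proof is correct and follows essentially the same route as the paper: the forward direction uses Theorem \ref{prop1.8}(2) and continuity of translation in $\mathcal{D}'$ (the paper phrases this by moving $\tau_{-s_{m_l}}$ onto the test function, which is the same step), and the converse tests (\ref{equa4}) against $\tau_{-x}\check{\varphi}$ to get the pointwise double limit for $T\ast\varphi$ and invokes Proposition \ref{prop1.4}. No gaps.
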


\begin{proof}
If $T\in \mathcal{B}_{aa}^{\prime },$ by the above theorem, for every
sequence $\left( s_{m}\right) _{m\in \mathbb{N}}\subset \mathbb{R},$ there
is a subsequence $\left( s_{m_{k}}\right) _{k}~$such that%
\begin{equation*}
S:=\lim_{k\rightarrow +\infty }\tau _{s_{m_{k}}}T~~\text{and}%
\lim_{k\rightarrow +\infty }\tau _{-s_{m_{k}}}S=T~\text{hold in }\mathcal{D}%
^{\prime }.
\end{equation*}%
Let $\varphi \in \mathcal{D},~$%
\begin{eqnarray*}
\lim_{k\rightarrow +\infty }\left\langle \tau _{-s_{ml}}\tau
_{s_{m_{k}}}T,\varphi \right\rangle &=&\lim_{k\rightarrow +\infty
}\left\langle \tau _{s_{m_{k}}}T,\tau _{s_{m_{l}}}\varphi \right\rangle , \\
&=&\left\langle S,\tau _{s_{m_{l}}}\varphi \right\rangle ,
\end{eqnarray*}%
and%
\begin{eqnarray*}
\lim_{l\rightarrow +\infty }\lim_{k\rightarrow +\infty }\left\langle \tau
_{-s_{m_{l}}}\tau _{s_{m_{k}}}T,\varphi \right\rangle &=&\lim_{l\rightarrow
+\infty }\left\langle \tau _{-s_{m_{l}}}S,\varphi \right\rangle , \\
&=&\left\langle T,\varphi \right\rangle ,
\end{eqnarray*}%
i.e. $\lim_{l\rightarrow +\infty }\lim_{k\rightarrow +\infty }\tau
_{-s_{m_{l}}}\tau _{s_{m_{k}}}T=T~$in$~\mathcal{D}^{\prime }.$

Conversely, let a sequence $\left( s_{m}\right) _{m\in \mathbb{N}}\subset
\mathbb{R}$ with a subsequence $\left( s_{m_{k}}\right) _{k}$ such that $%
\left( \ref{equa4}\right) $ holds, then $\forall \varphi \in \mathcal{D}$
and $\forall x\in \mathbb{R},$ we have
\begin{eqnarray*}
\lim_{l\rightarrow +\infty }\lim_{k\rightarrow +\infty }\left( T\ast \varphi
\right) \left( x+s_{m_{k}}-s_{m_{l}}\right) &=&\lim_{l\rightarrow +\infty
}\lim_{k\rightarrow +\infty }\left\langle \tau _{-s_{m_{l}}}\tau
_{s_{m_{k}}}T,\tau _{-x}\check{\varphi}\right\rangle , \\
&=&\left\langle T,\tau _{-x}\check{\varphi}\right\rangle , \\
&=&\left( T\ast \varphi \right) \left( x\right) ,
\end{eqnarray*}%
hence by proposition \ref{prop1.4}, $T\ast \varphi \in \mathcal{C}%
_{aa},~\forall \varphi \in \mathcal{D},$ i.e. $T\in \mathcal{B}_{aa}^{\prime
}.$
\end{proof}

W. Stepanov introduced the class of almost periodic functions for which only
local integrability in the sense of Lebesgue is required, this class was
extended to the so called Stepanov almost automorphic functions, which is a
more general concept than Bochner almost automorphy. We recall this
definition, see \cite{cazarino}.

\begin{definition}
\label{def1}A function $f$ $\in L_{loc}^{p},~1\leq p<\infty ,~$is said $%
S^{p}-$almost automorphic function$~(S^{p}-a.a.)~$if for every sequence $%
\left( s_{m}\right) _{m\in \mathbb{N} }\subset \mathbb{R},$ there is a
subsequence $\left( s_{m_{k}}\right) _{k\in \mathbb{N} }$ and a function $%
g\in L_{loc}^{p}$ such that
\begin{equation}
\underset{k\rightarrow +\infty }{\lim }\left( \int\limits_{0}^{1}\left\vert
f\left( x+s_{m_{k}}+t\right) -g\left( x+t\right) \right\vert ^{p}dt\right) ^{%
\frac{1}{p}}=0,~\text{for every }x\in \mathbb{R},  \label{equ2}
\end{equation}%
and
\begin{equation}
\underset{k\rightarrow +\infty }{\lim }\left( \int\limits_{0}^{1}\left\vert
g\left( x-s_{m_{k}}+t\right) -f\left( x+t\right) \right\vert ^{p}dt\right) ^{%
\frac{1}{p}}=0,~\text{for every }x\in \mathbb{R}.  \label{equ3}
\end{equation}
\end{definition}

It is well known that if$~1\leq p\leq q<\infty ,$ every $S^{q}-a.a.$
function is an $S^{p}-a.a.~$function, so the space of $S^{1}-a.a.~$functions
contains all the $S^{p}-a.a.~$functions.

\begin{proposition}
If $f~$is $S^{1}-$almost automorphic, then the associated distribution $%
T_{f}~$is almost automorphic.
\end{proposition}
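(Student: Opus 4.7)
The plan is to apply the characterization in Theorem~\ref{th03}, which reduces the claim to showing that $T_{f} \in \mathcal{D}_{L^{\infty}}^{\prime}$ and that $T_{f} \ast \varphi \in \mathcal{C}_{aa}$ for every $\varphi \in \mathcal{D}$.

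First, I would verify that the Stepanov norm $\|f\|_{S^{1}} := \sup_{x \in \mathbb{R}} \int_{x}^{x+1} |f(t)|\, dt$ is finite. If it were not, some sequence $(s_{m})$ with $\int_{s_{m}}^{s_{m}+1}|f(t)|\, dt \to +\infty$ could be chosen, and the subsequence supplied by Definition~\ref{def1} applied at $x = 0$ would give, via the triangle inequality in $L^{1}([0,1])$, that $\int_{s_{m_{k}}}^{s_{m_{k}}+1}|f(t)|\, dt \to \int_{0}^{1}|g(t)|\, dt < +\infty$, contradicting the choice of $(s_{m})$. Once $\|f\|_{S^{1}} < +\infty$ is secured, any $\varphi \in \mathcal{D}$ with $\operatorname{supp}\varphi \subset [-A, A]$ satisfies $|(T_{f}\ast\varphi)(x)| \leq \|\varphi\|_{L^{\infty}} \int_{x-A}^{x+A}|f(u)|\, du \leq 2A\,\|\varphi\|_{L^{\infty}}\,\|f\|_{S^{1}}$, so $T_{f} \in \mathcal{D}_{L^{\infty}}^{\prime}$.

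Next, fix $\varphi \in \mathcal{D}$ with $\operatorname{supp}\varphi \subset [-A, A]$, $A \in \mathbb{N}$, and an arbitrary sequence $(s_{m}) \subset \mathbb{R}$. The $S^{1}$-almost automorphy furnishes a subsequence $(s_{m_{k}})$ and a function $g \in L_{loc}^{1}$ for which (\ref{equ2}) and (\ref{equ3}) hold at every $x \in \mathbb{R}$. After the change of variable $u = -z$,
\begin{equation*}
(T_{f}\ast\varphi)(x + s_{m_{k}}) - (T_{g}\ast\varphi)(x) = \int_{-A}^{A} \bigl(f(x + s_{m_{k}} + u) - g(x + u)\bigr)\,\varphi(-u)\, du,
\end{equation*}
and partitioning $[-A, A]$ into unit intervals $[j, j+1]$, $-A \leq j \leq A - 1$, yields
\begin{equation*}
\bigl|(T_{f}\ast\varphi)(x + s_{m_{k}}) - (T_{g}\ast\varphi)(x)\bigr| \leq \|\varphi\|_{L^{\infty}} \sum_{j=-A}^{A-1} \int_{0}^{1} \bigl|f(x + j + s_{m_{k}} + t) - g(x + j + t)\bigr|\, dt.
\end{equation*}
Applying (\ref{equ2}) at the $2A$ points $x + j$ makes each summand tend to zero, so $(T_{f}\ast\varphi)(\,\cdot + s_{m_{k}}) \to (T_{g}\ast\varphi)(\,\cdot\,)$ pointwise on $\mathbb{R}$; the symmetric argument using (\ref{equ3}) then gives the return $(T_{g}\ast\varphi)(\,\cdot - s_{m_{k}}) \to (T_{f}\ast\varphi)(\,\cdot\,)$. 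Hence $T_{f}\ast\varphi \in \mathcal{C}_{aa}$ for every $\varphi \in \mathcal{D}$, and Theorem~\ref{th03} concludes $T_{f} \in \mathcal{B}_{aa}^{\prime}$.

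The main technical obstacle is the passage from the unit-interval $L^{1}$-convergence supplied by Definition~\ref{def1} to $L^{1}$-convergence on the arbitrary compact support of $\varphi$. This finite-sum reduction is legitimate only because the Stepanov definition requires (\ref{equ2}) and (\ref{equ3}) to hold at every $x \in \mathbb{R}$ along a \emph{common} subsequence; without that uniformity in $x$ the translations $x \mapsto x + j$ could not be absorbed and one could not recombine the unit integrals into an integral over $[-A, A]$.
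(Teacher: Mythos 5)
Your proof is correct and follows essentially the same route as the paper: the paper extracts the Stepanov subsequence and the limit function $g$, takes a test function with integer-endpoint support, splits that support into unit intervals and applies (\ref{equ2})--(\ref{equ3}) to get $\tau_{s_{m_k}}T_{f}\to T_{g}$ and $\tau_{-s_{m_k}}T_{g}\to T_{f}$ in $\mathcal{D}^{\prime}$ (condition (2) of Theorem \ref{prop1.8}), whereas you run the identical unit-interval estimate pointwise against $\tau_{-x}\check{\varphi}$ to verify instead that $T_{f}\ast\varphi\in\mathcal{C}_{aa}$ for all $\varphi\in\mathcal{D}$ — the same computation in a slightly different packaging. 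Your preliminary step proving Stepanov boundedness of $f$ and hence $T_{f}\in\mathcal{D}_{L^{\infty}}^{\prime}$ is a sound addition (the paper leaves this membership implicit, even though both its characterization and yours formally require it).
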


\begin{proof}
Recall that if $g\in L_{loc}^{1},\left\langle T_{g},\varphi \right\rangle
:=\int\limits_{\mathbb{R}}g\left( t\right) \varphi \left( t\right)
dt,\forall \varphi \in \mathcal{D}.$~Let $f$ \ be an $S^{1}-$almost
automorphic, then for every sequence $\left( s_{m}\right) _{m\in \mathbb{N}%
}\subset \mathbb{R},$ there exist a subsequence $\left( s_{m_{k}}\right)
_{k\in \mathbb{N}}$ and a function $g\in L_{loc}^{1}$,~such that $\left( \ref%
{equ2}\right) $ and $\left( \ref{equ3}\right) $ are satisfied. Let $T_{g}$
be the associated distribution to $g$ and let us show that
\begin{equation*}
T_{g}=\lim_{k\rightarrow +\infty }\tau _{s_{m_{k}}}T_{f}\text{ and }%
\lim_{k\rightarrow +\infty }\tau _{-s_{m_{k}}}T_{g}=T_{f}~\text{ exist in }%
\mathcal{D}^{\prime }.
\end{equation*}%
Indeed, for $\varphi \in \mathcal{D}$ with $supp\varphi \subset \left[ a,b%
\right] ~$we can assume that $a$ and $b$ are integers, so
\begin{eqnarray*}
\left\vert \left\langle \tau _{s_{m_{k}}}T_{f},\varphi \right\rangle
-\left\langle T_{g},\varphi \right\rangle \right\vert &=&\left\vert
\int\limits_{a}^{b}\left( f\left( s_{m_{k}}+t\right) -g\left( t\right)
\right) \varphi \left( t\right) dt\right\vert , \\
&\leq &\sup_{x\in \mathbb{R}}\left\vert \varphi \left( x\right) \right\vert
\sum_{i=a}^{b-1}\int\limits_{i}^{i+1}\left\vert f\left( s_{m_{k}}+t\right)
-g\left( t\right) \right\vert dt, \\
&\leq &\sup_{x\in \mathbb{R}}\left\vert \varphi \left( x\right) \right\vert
\sum_{i=a}^{b-1}\int\limits_{0}^{1}\left\vert f\left( i+s_{m_{k}}+t\right)
-g\left( i+t\right) \right\vert dt,
\end{eqnarray*}%
consequently, $\lim\limits_{k\rightarrow \infty }\left\vert \left\langle
\tau _{s_{m_{k}}}T_{f},\varphi \right\rangle -\left\langle T_{g},\varphi
\right\rangle \right\vert =0.~$In the same way, we obtain that
\begin{equation*}
\left\vert \left\langle \tau _{-s_{m_{k}}}T_{g},\varphi \right\rangle
-\left\langle T_{f},\varphi \right\rangle \right\vert \underset{k\rightarrow
+\infty }{\longrightarrow }0,
\end{equation*}%
hence the conclusion.
\end{proof}

\section{Applications to difference-differential equations}

We~consider first linear difference-differential operators%
\begin{equation}
L_{h}=\sum\limits_{i=0}^{p}\sum\limits_{j=0}^{q}a_{ij}\frac{d^{i}}{dx^{i}}%
\tau _{h_{j}},  \label{equa8}
\end{equation}%
where$~\left( a_{ij}\right) _{i\leq p,~j\leq q}~$are complex numbers and $%
h=\left( h_{j}\right) _{j\leq q}\subset \mathbb{R}^{q}.$

\begin{remark}
It is easy to see that $L_{h}T\in \mathcal{B}_{aa}^{\prime },\forall T~\in
\mathcal{B}_{aa}^{\prime }.$
\end{remark}

Let $p\in \mathbb{Z}_{+}~$and denote by $\mathcal{C}_{ub}^{p}$~the space of
functions$~\varphi $ of class $\mathcal{C}^{p}$ such that for every $j\leq
p,~\varphi ^{\left( j\right) }~$is uniformly continuous and bounded on $%
\mathbb{R}.$

The following result is an extension of theorem $4$ of \cite{bochner3}.

\begin{theorem}
\label{th04}If every solution $f\in \mathcal{C}_{ub}^{p}$ of the homogeneous
equation$~$%
\begin{equation}
L_{h}f=0  \label{equa1}
\end{equation}%
is an almost automorphic function, then$~$any solution$~T~\in \mathcal{D}%
_{L^{\infty }}^{\prime }~$of the inhomogeneous equation$~$
\begin{equation}
L_{h}T=S\in \mathcal{B}_{aa}^{\prime }  \label{equa6}
\end{equation}%
is an almost automorphic distribution$.$
\end{theorem}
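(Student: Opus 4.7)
The plan is to reduce the distributional statement to a scalar question about the smooth bounded function $u := T \ast \varphi$ for an arbitrary $\varphi \in \mathcal{D}$, and then to run a classical Bochner-style compactness argument. By the convolution characterization of $\mathcal{B}_{aa}'$ (Theorem \ref{th03}), it suffices to show that $u \in \mathcal{C}_{aa}$ for every such $\varphi$. Because $T \in \mathcal{D}_{L^{\infty}}'$, each derivative $u^{(j)}$ is bounded, and the mean value argument used in the proof of Proposition \ref{pr02} shows that each $u^{(j)}$ is uniformly continuous; thus $u \in \mathcal{C}_{ub}^{p}$. Since $L_{h}$ commutes with convolution, $L_{h}u = (L_{h}T) \ast \varphi = S \ast \varphi =: g$, and $g \in \mathcal{C}_{aa}$ by the very definition of $\mathcal{B}_{aa}'$.

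Fix any sequence $(s_{m}) \subset \mathbb{R}$. Using $g \in \mathcal{C}_{aa}$, I first extract a subsequence $(s_{m_{k}})$ along which $g(\cdot + s_{m_{k}}) \to \tilde{g}$ and $\tilde{g}(\cdot - s_{m_{k}}) \to g$ pointwise. Since $u, u', \ldots, u^{(p)}$ are bounded and uniformly continuous, a diagonal Arzel\`a--Ascoli extraction yields a further subsequence, still denoted $(s_{m_{k}})$, along which $u^{(j)}(\cdot + s_{m_{k}}) \to v^{(j)}$ uniformly on compacts for every $j \leq p$, with $v \in \mathcal{C}_{ub}^{p}$. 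Passing to the limit in the identity $L_{h}u(x + s_{m_{k}}) = g(x + s_{m_{k}})$ yields $L_{h}v = \tilde{g}$. A second Arzel\`a--Ascoli extraction, now applied to $v$, produces a subsequence along which $v^{(j)}(\cdot - s_{m_{k}}) \to w^{(j)}$ uniformly on compacts, with $w \in \mathcal{C}_{ub}^{p}$; the dual convergence $\tilde{g}(\cdot - s_{m_{k}}) \to g$ then gives $L_{h}w = g$. Subtracting, $u - w \in \mathcal{C}_{ub}^{p}$ is a solution of the homogeneous equation $L_{h}f = 0$, so the standing hypothesis forces $u - w \in \mathcal{C}_{aa}$.

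The main obstacle is now the final step: the extraction only shows $v(\cdot - s_{m_{k}}) \to w$, whereas Bochner's condition for $u$ requires the limit to be $u$ itself. To bridge this gap I plan to refine the subsequence by exploiting the now-established almost automorphy of $u - w$. Apply Bochner's criterion to $u - w$ along $(s_{m_{k}})$ to obtain a sub-subsequence $(s_{m_{k_{j}}})$ and a measurable bounded $h$ with $(u-w)(\cdot + s_{m_{k_{j}}}) \to h$ and $h(\cdot - s_{m_{k_{j}}}) \to u - w$ pointwise. Combining these with the surviving translate-limits of $u$ and $v$ along $(s_{m_{k_{j}}})$ and invoking the two-limit criterion of Proposition \ref{prop1.4}, one tracks the double limit $\lim_{l}\lim_{k} u(x + s_{m_{k}} - s_{m_{l}})$ and identifies it with $u(x)$, thereby verifying almost automorphy of $u$. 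This bootstrapping step, a faithful adaptation of the classical Bochner trick (Theorem 4 of \cite{bochner3}) and of the Zaki-type argument underpinning Proposition \ref{th02}, is the only place where the full force of the hypothesis on $L_{h}f = 0$ is used, and is the crux of the proof. Once $u \in \mathcal{C}_{aa}$ is established for every $\varphi \in \mathcal{D}$, Theorem \ref{th03} concludes $T \in \mathcal{B}_{aa}'$.
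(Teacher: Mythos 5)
Your first paragraph is exactly the paper's proof: reduce to $u=T\ast\varphi$, note $u\in\mathcal{C}_{ub}^{\infty}$, $L_{h}u=S\ast\varphi\in\mathcal{C}_{aa}$, and conclude via Theorem \ref{th03}. The paper stops there and simply \emph{cites} Theorem 4 of \cite{bochner3} for the scalar statement ``$u\in\mathcal{C}_{ub}^{p}$, $L_{h}u=g\in\mathcal{C}_{aa}$, homogeneous solutions a.a. $\Rightarrow u\in\mathcal{C}_{aa}$''. You instead try to reprove that scalar theorem, and this is where there is a genuine gap. Your Arzel\`a--Ascoli construction yields, along the chosen subsequence, $u(\cdot+s_{m_k})\to v$, $v(\cdot-s_{m_k})\to w$, $L_{h}w=g$, hence $u-w$ is a homogeneous solution in $\mathcal{C}_{ub}^{p}$ and therefore almost automorphic. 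But the double limit you must control, $\lim_{l}\lim_{k}u(x+s_{m_k}-s_{m_l})$, equals $v(x-s_{m_l})$ in the inner limit and hence $w(x)$ in the outer one --- not $u(x)$. So to invoke Proposition \ref{prop1.4} you need $w=u$, i.e.\ that the almost automorphic homogeneous solution $u-w$ vanishes, and nothing in your argument delivers this: almost automorphy of $u-w$ is not a vanishing criterion (only Proposition \ref{prop1.1-1.2-1.3}-(2), decay at infinity, gives vanishing, and you have no decay). Moreover, once the subsequence is fixed, passing to further subsequences cannot change $v$ or $w$, so the ``bootstrapping'' refinement in your last paragraph --- extracting again for $u-w$ and ``tracking the double limit'' --- cannot turn $w$ into $u$; what it actually produces is that the back-limit of $w$ is $2w-u$, which is no contradiction. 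The difficulty is real: for a merely almost automorphic $u$, a subsequence along which the forward limit $v$ exists need not return to $u$ under back-translation; the whole content of Bochner's Theorem 4 (and of the Zaki-type argument behind Proposition \ref{th02}) is the careful choice of the subsequence so that the return limit is $u$ itself. Your proposal states this crux as a plan (``I plan to refine\dots one tracks the double limit\dots and identifies it with $u(x)$'') rather than proving it.

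The repair is either to carry out Bochner's actual argument in full, or to do what the paper does: after the (correct) reduction in your first paragraph, observe that $u\in\mathcal{C}_{ub}^{p}$ solves $L_{h}u=S\ast\varphi\in\mathcal{C}_{aa}$ and apply Theorem 4 of \cite{bochner3} directly, then conclude $T\in\mathcal{B}_{aa}^{\prime}$ by Theorem \ref{th03}.
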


\begin{proof}
~Let $T\in \mathcal{D}_{L^{\infty }}^{\prime }$ be a solution of equation $%
\left( \ref{equa6}\right) ,~$then for$~$every $\varphi \in \mathcal{D},$ we
have%
\begin{equation*}
L_{h}\left( T\ast \varphi \right) =S\ast \varphi .
\end{equation*}%
On the other hand, $\forall j\in \mathbb{Z}_{+},~\left( T\ast \varphi
\right) ^{\left( j\right) }=T\ast \varphi ^{\left( j\right) }\in \mathcal{D}%
_{L^{\infty }},$ it is clear that $T\ast \varphi ^{\left( j\right) }~$is a
uniformly continuous function on $\mathbb{R},$ so $T\ast \varphi \in
\mathcal{C}_{ub}^{\infty }$ and it is a solution of equation $\left( \ref%
{equa6}\right) $ with a second member $S\ast \varphi \in \mathcal{C}_{aa}$
instead of $S,~$this together with the assumption on the homogeneous
equation $\left( \ref{equa1}\right) ~$imply that theorem $4$ of \cite%
{bochner3} is applicable, consequently $T\ast \varphi \in \mathcal{C}%
_{aa},~\forall \varphi \in \mathcal{D},$ which gives $T\in \mathcal{B}%
_{aa}^{\prime }.$
\end{proof}

The following consequences of the theorem recapture the Bohl-Bohr result and
the invariance by translation of the space of almost automorphic
distributions.

\begin{corollary}
\begin{enumerate}
\item If$~$ $T\in \mathcal{D}_{L^{\infty }}^{\prime }$ and $\dfrac{dT}{dx}%
\in \mathcal{B}_{aa}^{\prime },$ then $T\in \mathcal{B}_{aa}^{\prime }.$

\item If $T\in \mathcal{B}_{aa}^{\prime },$ then $\tau _{h}T\in \mathcal{B}%
_{aa}^{\prime },\forall h\in \mathbb{R}.$
\end{enumerate}
\end{corollary}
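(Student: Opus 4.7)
The plan is to deduce both statements as direct specializations of Theorem \ref{th04}, by choosing the operator $L_h$ appropriately in each case.

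For part (1), the natural choice is $L=\tfrac{d}{dx}$, which fits the template \eqref{equa8} with $p=1$, $q=0$, $a_{10}=1$, $a_{00}=0$. By hypothesis $LT = dT/dx \in \mathcal{B}_{aa}^{\prime}$, so $T$ is a solution in $\mathcal{D}_{L^{\infty}}^{\prime}$ of an inhomogeneous equation of type \eqref{equa6}. To invoke Theorem \ref{th04} I still need to verify that every $f\in\mathcal{C}_{ub}^{1}$ with $f'=0$ is almost automorphic; but such $f$ is constant, and constant functions are trivially in $\mathcal{C}_{aa}$. Theorem \ref{th04} then yields $T\in\mathcal{B}_{aa}^{\prime}$, which reproduces the Bohl--Bohr statement already obtained in Proposition \ref{prop1.11}.

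For part (2), set $U:=\tau_{h}T$. Since $\mathcal{D}_{L^{\infty}}^{\prime}$ is invariant under translation and $\mathcal{B}_{aa}^{\prime}\subset\mathcal{D}_{L^{\infty}}^{\prime}$, we have $U\in\mathcal{D}_{L^{\infty}}^{\prime}$. Now apply Theorem \ref{th04} to $U$ with the pure translation operator $L=\tau_{-h}$, obtained from \eqref{equa8} by taking $p=0$, $q=0$, $a_{00}=1$ and $h_{0}=-h$. Then $LU=\tau_{-h}\tau_{h}T=T\in\mathcal{B}_{aa}^{\prime}$, so $U$ solves an inhomogeneous equation of the required type. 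The associated homogeneous equation $\tau_{-h}f=0$ reads $f(x-h)=0$ for every $x\in\mathbb{R}$, forcing $f\equiv 0$, which is of course almost automorphic; hence the hypothesis of Theorem \ref{th04} on \eqref{equa1} is vacuously satisfied, and we conclude $U=\tau_{h}T\in\mathcal{B}_{aa}^{\prime}$.

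No serious obstacle is expected: the work has already been done in Theorem \ref{th04}, and in both specializations the homogeneous equation has an essentially trivial solution set (constants in (1), the zero function in (2)) whose members are automatically almost automorphic. The only points to be careful about are verifying that $d/dx$ and $\tau_{-h}$ genuinely fit the template \eqref{equa8} of difference-differential operators, and keeping the sign of the translation straight in~(2).
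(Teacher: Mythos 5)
Your proposal is correct and takes essentially the same route the paper intends: the corollary is stated as a direct specialization of Theorem \ref{th04}, taking $L=\tfrac{d}{dx}$ for (1) (where the $\mathcal{C}_{ub}^{1}$ solutions of the homogeneous equation are constants, hence almost automorphic) and a pure translation operator fitting \eqref{equa8} for (2), applied to $\tau_{h}T\in\mathcal{D}_{L^{\infty}}^{\prime}$ with second member $T\in\mathcal{B}_{aa}^{\prime}$. Your verifications of the hypothesis on the homogeneous equation in both cases are exactly what is needed.
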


A particular case of the operators $\left( \ref{equa8}\right) $ are linear
ordinary differential operators $L=\sum\limits_{i=0}^{p}a_{i}\frac{d^{i}}{%
dx^{i}},~$these operators can be tackled in the more general situation of
systems
\begin{equation}
U^{\prime }=AU+S,  \label{equ1}
\end{equation}%
where $A=\left( a_{ij}\right) _{1\leq i,j\leq p}$ is a given square-matrix
of complex numbers, also the vector distribution$~S=\left( S_{i}\right)
_{1\leq i\leq p}\in \left( \mathcal{D}^{\prime }\right) ^{p},$ and $U=\left(
U_{i}\right) _{1\leq i\leq p\text{ }}$is the unknown vector distribution.

\begin{theorem}
\label{thm1}If all$~S_{i},~1\leq i\leq p~$are almost automorphic
distributions and the matrix $A$ has no eigenvalues with real part zero,
then the equation $\left( \ref{equ1}\right) $ admits a unique solution $U\in
\left( \mathcal{D}_{L^{\infty }}^{\prime }\right) ^{p}~$which is, in fact,
an almost automorphic vector distribution.
\end{theorem}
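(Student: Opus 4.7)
The plan is to produce the solution as a convolution $U = G \ast S$ with a matrix-valued Green kernel $G$ whose entries lie in $L^{1}$, and then invoke Proposition~\ref{prop1.7/1.10}(4) to transfer almost automorphy from $S$ to $U$. Since $A$ has no purely imaginary eigenvalues, one decomposes $\mathbb{C}^{p} = E^{-} \oplus E^{+}$ into the invariant subspaces corresponding to eigenvalues of negative, respectively positive, real part, with associated spectral projections $\Pi^{-}$ and $\Pi^{+} = I - \Pi^{-}$. Standard ODE estimates then give constants $C, \alpha > 0$ such that the matrix norm satisfies $\| e^{xA}\Pi^{-} \| \leq C e^{-\alpha x}$ for $x \geq 0$ and $\| e^{xA}\Pi^{+} \| \leq C e^{\alpha x}$ for $x \leq 0$. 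I would then define the matrix kernel
\[
G(x) = \begin{cases} e^{xA}\,\Pi^{-} & \text{if } x > 0, \\ -\,e^{xA}\,\Pi^{+} & \text{if } x < 0. \end{cases}
\]
Each entry of $G$ is continuous off $\{0\}$ and exponentially decaying at $\pm\infty$, hence belongs to $L^{1}(\mathbb{R})$; a direct computation gives $G' - AG = \delta_{0}\,I$ in $(\mathcal{D}')^{p\times p}$.

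Next I would set $U_{i} := \sum_{j=1}^{p} G_{ij} \ast S_{j}$, $i = 1,\dots,p$. Since each $G_{ij} \in L^{1} \subset \mathcal{D}_{L^{1}}'$ and each $S_{j} \in \mathcal{B}_{aa}'$, Proposition~\ref{prop1.7/1.10}(4) yields $U_{i} \in \mathcal{B}_{aa}' \subset \mathcal{D}_{L^{\infty}}'$. Differentiating the convolution componentwise gives
\[
U' - AU = (G' - AG) \ast S = (\delta_{0}\,I) \ast S = S,
\]
so $U \in (\mathcal{B}_{aa}')^{p}$ is a solution of (\ref{equ1}) lying in $(\mathcal{D}_{L^{\infty}}')^{p}$, which already establishes both existence and the almost-automorphy assertion.

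For uniqueness within $(\mathcal{D}_{L^{\infty}}')^{p}$, suppose $U_{1}, U_{2}$ are two such solutions and set $V := U_{1} - U_{2} \in (\mathcal{D}_{L^{\infty}}')^{p}$. Then $V$ satisfies the homogeneous system $V' = AV$. The operator $d/dx - A$ is hypoelliptic, so $V$ is $\mathcal{C}^{\infty}$ and hence a classical solution $V(x) = e^{xA}c$ for some constant $c \in \mathbb{C}^{p}$. Decomposing $c = \Pi^{-}c + \Pi^{+}c$, boundedness of $e^{xA}\Pi^{+}c$ as $x \to +\infty$ forces $\Pi^{+}c = 0$ (otherwise this term grows exponentially), and boundedness of $e^{xA}\Pi^{-}c$ as $x \to -\infty$ forces $\Pi^{-}c = 0$; thus $c = 0$ and $V \equiv 0$.

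The substantive ingredient is the hyperbolic splitting producing $G \in L^{1}$; once that is in hand, both almost automorphy (via Proposition~\ref{prop1.7/1.10}(4)) and uniqueness (via the growth dichotomy on $E^{\pm}$) follow cleanly. The main thing to watch is bookkeeping: the identity $G' - AG = \delta_{0}\,I$ and the termwise differentiation of the matrix convolution must be justified entry by entry in $\mathcal{D}'$, but this is routine since $G$ is classical off the origin and the jump $G(0^{+}) - G(0^{-}) = \Pi^{-} + \Pi^{+} = I$ produces precisely the Dirac mass.
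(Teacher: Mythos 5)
Your proof is correct in substance but takes a genuinely different route from the paper. The paper uses the same convolution--reduction device as in the rest of the section: for each $\varphi \in \mathcal{D}$ it passes to $v = U\ast \varphi \in (\mathcal{C}^{\infty})^{p}$, a bounded smooth solution of $v^{\prime} = Av + S\ast \varphi$ with $S\ast \varphi \in (\mathcal{C}_{aa})^{p}$, and then invokes theorem $2$ of \cite{zaidman} for the classical vector equation; this gives $U_{i}\ast \varphi \in \mathcal{C}_{aa}$ for every $\varphi$, hence $U \in (\mathcal{B}_{aa}^{\prime})^{p}$, and uniqueness of $U$ follows from $U_{1}\ast \varphi = U_{2}\ast \varphi$ for all $\varphi$. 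You instead work directly at the distribution level: the hyperbolic splitting produces the matrix Green kernel $G \in L^{1}$ with $G^{\prime} - AG = \delta_{0}I$, existence comes from the explicit formula $U = G\ast S$, and almost automorphy from Proposition \ref{prop1.7/1.10}(4), i.e. $\mathcal{B}_{aa}^{\prime}\ast \mathcal{D}_{L^{1}}^{\prime} \subset \mathcal{B}_{aa}^{\prime}$. What your approach buys: existence is constructive (the paper's proof, as written, presupposes a solution $U \in (\mathcal{D}_{L^{\infty}}^{\prime})^{p}$ rather than building one), and Zaidman's theorem is not needed; what the paper's approach buys is brevity and uniformity, since all the dichotomy work is delegated to the cited classical result.

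One step in your uniqueness argument needs patching. From $V \in (\mathcal{D}_{L^{\infty}}^{\prime})^{p}$ and $V(x) = e^{xA}c$ you conclude that $e^{xA}\Pi^{+}c$ is bounded as $x \to +\infty$; but membership in $\mathcal{D}_{L^{\infty}}^{\prime}$ means bounded as a distribution, not pointwise bounded (for instance $\frac{d}{dx}\cos (e^{x}) = -e^{x}\sin (e^{x})$ is smooth, unbounded, and lies in $\mathcal{D}_{L^{\infty}}^{\prime}$), so the inference is not immediate. It is easily repaired for your specific $V$: with a mollifier $\rho_{m}$ one has $(V\ast \rho_{m})(x) = e^{xA}M_{m}c$, where $M_{m} = \int e^{-yA}\rho_{m}(y)\,dy \to I$ and $M_{m}$ commutes with $\Pi^{\pm}$; if $\Pi^{+}c \neq 0$ then $\Pi^{+}M_{m}c \neq 0$ for large $m$, and $\Vert e^{xA}M_{m}c\Vert \geq C^{-1}e^{\alpha x}\Vert \Pi^{+}M_{m}c\Vert - Ce^{-\alpha x}\Vert \Pi^{-}M_{m}c\Vert \to +\infty$, contradicting $V\ast \rho_{m} \in L^{\infty}$; the stable component is excluded symmetrically as $x \to -\infty$. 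Alternatively, argue as the paper does: for every $\varphi \in \mathcal{D}$, $V\ast \varphi$ is a bounded classical solution of the homogeneous system, hence zero by the dichotomy, so $V = 0$. With that fix your proof is complete.
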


\begin{proof}
Consider the equation $\left( \ref{equ1}\right) $ and let $\varphi \in
\mathcal{D},$ then%
\begin{equation*}
\left( U\ast \varphi \right) ^{\prime }=A\left( U\ast \varphi \right)
+\left( S\ast \varphi \right) ,
\end{equation*}%
where $U\ast \varphi =\left( U_{i}\ast \varphi \right) _{1\leq i\leq p}$ and
$\left( S\ast \varphi \right) =\left( S_{i}\ast \varphi \right) _{1\leq
i\leq p},$ which gives the following system of equations
\begin{equation*}
v^{\prime }=Av+g
\end{equation*}%
with $g=S\ast \varphi \in \left( \mathcal{C}_{aa}\right) ^{p}$ and $v=U\ast
\varphi \in \left( \mathcal{C}^{\infty }\right) ^{p},$ consequently we apply
theorem $2$ of \cite{zaidman} to obtain that there exists a unique $v\in
\left( \mathcal{C}_{aa}\right) ^{p},$ so $U_{i}\ast \varphi \in \mathcal{C}%
_{aa},$ $\forall \varphi \in \mathcal{D},\forall i=1,...,p,~$i.e. $U\in (%
\mathcal{B}_{aa}^{\prime })^{p}.$
\end{proof}

We conclude with the following consequence of the last theorem.

\begin{corollary}
If the polynomial$~\sum\limits_{i=0}^{p}a_{i}\lambda ^{i}~$has no roots with
real part zero, then any solution $T\in \mathcal{D}_{L^{\infty }}~$of the
inhomogeneous equation$~LT=S\in \mathcal{B}_{aa}^{\prime }$ is an almost
automorphic distribution$.$
\end{corollary}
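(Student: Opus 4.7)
The plan is to reduce the scalar equation $LT=S$ of order $p$ to a first-order $p\times p$ system of the form $U'=AU+\tilde{S}$ and invoke Theorem \ref{thm1}. Without loss of generality assume $a_{p}\neq 0$ (otherwise $L$ has order strictly less than $p$ and we argue with the reduced polynomial, whose roots still avoid the imaginary axis), and normalise by dividing by $a_{p}$.

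First I would set $U_{i}:=T^{(i-1)}$ for $i=1,\dots,p$ and define $U=(U_{1},\dots,U_{p})^{\top}$. Since the spaces $\mathcal{D}_{L^{\infty }}^{\prime }$ is stable under differentiation (from $|\langle T',\varphi\rangle|=|\langle T,\varphi'\rangle|\leq C|\varphi'|_{m,1}\leq C|\varphi|_{m+1,1}$ we obtain $T'\in\mathcal{D}_{L^{\infty }}^{\prime }$, and the argument iterates), each $U_{i}$ lies in $\mathcal{D}_{L^{\infty }}^{\prime }$. Rewriting $U_{i}'=U_{i+1}$ for $i<p$ and $U_{p}'=-\sum_{i=0}^{p-1}(a_{i}/a_{p})U_{i+1}+S/a_{p}$ gives the system $U'=AU+\tilde{S}$ where $A$ is the companion matrix of $\lambda^{p}+\sum_{i=0}^{p-1}(a_{i}/a_{p})\lambda^{i}$ and $\tilde{S}=(0,\dots ,0,S/a_{p})^{\top }$.

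Next I would check the hypotheses of Theorem \ref{thm1}. The characteristic polynomial of the companion matrix $A$ is precisely the normalised polynomial, whose roots coincide with the roots of $\sum_{i=0}^{p}a_{i}\lambda^{i}$; by assumption none of them has real part zero, so $A$ has no eigenvalue on the imaginary axis. Also $\tilde{S}\in (\mathcal{B}_{aa}^{\prime })^{p}$ because $S\in\mathcal{B}_{aa}^{\prime }$ and the zero distribution trivially belongs to $\mathcal{B}_{aa}^{\prime }$. Theorem \ref{thm1} therefore forces $U\in (\mathcal{B}_{aa}^{\prime })^{p}$, and reading off the first component yields $T=U_{1}\in\mathcal{B}_{aa}^{\prime }$.

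I do not expect a serious obstacle here: the two small points to be careful about are (i) justifying that all the $T^{(i)}$ are indeed bounded distributions so that $U\in(\mathcal{D}_{L^{\infty }}^{\prime })^{p}$, which is the admissibility needed to quote Theorem \ref{thm1}, and (ii) matching the eigenvalues of the companion matrix with the roots of the original polynomial, which is classical linear algebra. Once these are in place, Theorem \ref{thm1} does all the work.
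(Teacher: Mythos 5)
Your proof is correct and is essentially the paper's intended argument: the corollary is stated there as a direct consequence of Theorem \ref{thm1}, obtained by exactly this companion-matrix reduction of $LT=S$ to a first-order system $U'=AU+\tilde{S}$ with $U=(T,T',\dots,T^{(p-1)})\in(\mathcal{D}_{L^{\infty}}^{\prime})^{p}$, the eigenvalues of $A$ being the roots of $\sum_{i=0}^{p}a_{i}\lambda^{i}$. The two points you flag (closure of $\mathcal{D}_{L^{\infty}}^{\prime}$ under differentiation, and the root--eigenvalue correspondence) are handled adequately, so nothing is missing.
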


\end{document}